\newtheorem{thm}{Theorem}
\newtheorem{lemat}[thm]{Lemma}
\newtheorem{cor}[thm]{Corollary}
\newtheorem{prop}[thm]{Proposition}
\newtheorem{fact}[thm]{Fact}
\theoremstyle{definition}
\newtheorem{defi}[thm]{Definition}
\numberwithin{thm}{section}
\title[Type III contractions and quintic threefolds]{Type III contractions and quintic threefolds}
\author{Kacper Grzelakowski}
\begin{document}
\begin{abstract}
We study type III contractions of Calabi-Yau threefolds containing a ruled surface over a smooth curve. We discuss the conditions necessary for the image threefold to by smoothable. We describe the change in Hodge numbers caused by this contraction and smoothing deformation. A generalization of a fomula for calculating Hodge numbers of hypersurfaces in $\mathbb{P}^4$ with ordinary double and triple points is presented. We use these results to construct new Calabi-Yau threefolds of Picard rank two arising from a family of quintic threefolds containing a cone.
\end{abstract}
\maketitle

By a Calabi-Yau threefold we mean a complex projective threefold $X$ with $K_X=0$ and $h^1(\mathscr{O}_X)=h^2(\mathscr{O}_X)=0$. We allow $X$ to have some singularities, namely ordinary double and ordinary triple points. A primitive contraction is a birational morphism between Calabi-Yau threefolds decreasing the Picard rank by one. Contraction is of type I if it contracts a finite set of curves to points, of type II if it contracts a divisor to a point and of a type III if it contracts a divisor to a curve. It is known that certain deformation classes of Calabi-Yau threefolds are linked with each other by so called extremal transitions, that is primitive contractions followed by smoothing. These transitions are called conifold if the singularities in the image of the contraction are ordinary double points.  In this paper we aim to discuss the process of obtaining new Calabi-Yau threefolds through a conifold transition involving a type III primitive contraction. In particular we construct a family of quintic threefolds containing a cone which give rise to new Calabi-Yau threefolds of Picard rank 2.

Our results are in the spirit of the so called Reid's fantasy. M. Reid in \cite{MiR} has conjectured that there could exist an irreducible space of Calabi-Yau threefolds such that any Calabi Yau threefold would be a small resolution of a degeneration of this family to something with ordinary double points. We can think of this space as of a graph with each node representing one deformation class. Finding paths between these nodes is an actively studied problem. Wilson in \cite{PMHW1} provided a detailed description of the K\"{a}hler cones of Calabi-Yau threefolds showing in particular that codimension one faces of their cones correspond to primitive contractions. In \cite{MG1} and \cite{MG2} Gross described the conditions for the primitive Calabi-Yau threefolds obtained by such contraction to be smoothable thus giving a link between nodes of the graph.

It is known that a type III contraction of a divisor $E$ to a curve $C$ deforms to a type I contraction providing $g(C) > 1$ \cite{MG1}. It is conjectured that sufficiently general transition arising from the type III contraction deforms to a conifold transition and we show that in the case of surfaces ruled over smooth curves this conjecture holds. We provide the formula describing the change in Hodge numbers of threefolds under this transition. These results are in line with those obtained in \cite[3.4]{KMP}. 
Namely we show 
\begin{prop}
Let $X$ be a smooth Calabi-Yau threefold containing a smooth surface $E$ ruled over a smooth curve $C$ of genus $p_a(C)>1$, let $\pi:X\to Y$ be a primitive type III contraction and let $\tilde{Y}$ be the smooth Calabi-Yau obtained by deforming $Y$. Then
\begin{enumerate}[resume]
\item $h^{1,1}(\tilde{Y})=h^{1,1}(X)-1$
\item $h^{1,2}(\tilde{Y})=h^{1,2}(X)+2p_a(C)-3.$
\end{enumerate}
\end{prop}

To provide examples of type III smoothable contractions we study the geometry of quintic threefolds containing a cone over a curve. We choose quintic threefolds as they are natural objects being hypersurfaces of a relatively low degree and dimension yet having interesting properties. Geometry of quintic threefolds has been studied for example in  \cite{BVG} or \cite{CHM}. Nodal quintic threefolds have been analysed by Friedman \cite{FR} and van Straten \cite{DVS}, while the problem of a possible number of triple points on such hypersurface has been the subject of recent work by Kloosterman and Rams \cite{KR}. We hope that our results may find  application in solving that last problem. 

We resolve singularities of quintics with triple point at the vertex of a cone to obtain smooth threefolds containing a ruled surface and use type III contractions to construct new Calabi-Yau threefolds with Picard rank $2$. We provide a generalization of the results of Cynk from \cite{CS4} and \cite{CS3} regarding the Hodge numbers of hypersurfaces in $\mathbb{P}^4$ with ordinary double and triple points and use it to calculate the Hodge numbers of quintics in question. In our analysis we also adapt some of the results of Kapustka and Kapustka from \cite{GK} and \cite{GK2} where they discuss primitive type II contractions of Calabi-Yau threefolds, describe their images and smoothing families and give the formula for calculating Hodge numbers of threefolds obtained through this process. 

The organization of the paper is as follows. In Section 1 we recall the general facts concerning the quintic threefolds with triple points. We proceed with analysing the geometry of those quintics that contain a cone over a smooth curve. We discuss the intersection theory on these threefolds and describe how their K\"{a}hler cone looks like providing they are of Picard rank $3$. We show that such Calabi-Yau threefolds admit a type III contraction and either one type I and one type II contraction or two different type I contractions. In Section 2 we give the formula to calculate the Hodge numbers of hypersurfaces in $\mathbb{P}^4$ with ordinary double and triple points. We also provide the general formula for calculating Hodge numbers of Calabi-Yau threefolds obtained after type III contraction of a ruled surface to a curve with $g(C)>1$ and a smoothing deformation.

Section 3 is devoted to giving examples of the above procedure. We discuss in detail the construction of a family of quintic threefolds containing a cone over a curve and the resolution of their singularities which produces threefolds containing a smooth ruled surface. Then we describe the contractions and smoothings of the latter and finally we use the formulas from the previous section to calculate the Hodge numbers of obtained threefolds. The procedure we describe is illustrated on the diagram below. Here $\bar{X}$ is a quintic threefold containing a cone over a smooth curve with triple point at the vertex of the cone and possible nodes at its surface, $\tilde{X}$ and $X$ denote threefolds obtained after the small resolution of nodes and the consecutive blow up of the triple point, $Y$ is the image of the type III contraction of $X$ and $\tilde{Y}$ is a smooth deformation of $Y$.   
\begin{center}
\begin{tikzcd}
       &                           & X \arrow[ld] \arrow[rd, "\pi"]          &                  &          \\
\bar X & \tilde X \arrow[l] & {} & Y\in\mathscr{Y} \ni \tilde Y 
\end{tikzcd}
\end{center}
 Tables containing numerical results are presented at the end of the paper.
\section*{Acknowledgements}
The author wishes to express his deepest gratitude for the continuous help and support to G. Kapustka and to S. Cynk for the helpful discussion of his results. Author is supported by the project Narodowe Centrum Nauki 2018/30/E/ST1/00530.

\section{Geometry of quintic threefolds containing a cone with triple point at its vertex}
\subsection{Quintic threefolds with triple point}The object of our particular interest is a quintic hypersurface $X$ in $\mathbb{P}^4$ admitting an ordinary triple point. If we assume that the triple point is $O=[0:0:0:0:1]$ the defining equation $F$ of $X$ can be written in the form $F=u^3F_3(x,y,z,t)+uF_4(x,y,z,t)+F_5(x,y,z,t)$ where $u$ is the last coordinate of $\mathbb{P}^4$ and $F_i$ are homogeneous polynomials in given variables defining smooth surfaces in $\mathbb{P}^3$. In the latter we will often skip the variables and just write $F_i$ if the context is clear. For $O$ to be an ordinary triple point of $X$ we need $V(F_3)$ to define a smooth cubic surface. 
\begin{lemat}
Let $X$ be a Calabi-Yau threefold admitting only ordinary triple points as singularities. Then $\tilde{X}$ obtained by blowing up the singular locus of $X$ is again a Calabi-Yau threefold.  
\end{lemat}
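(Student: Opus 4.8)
The plan is to establish three properties of $\tilde X$: that it is smooth and projective, that $K_{\tilde X}=0$, and that $h^1(\mathscr{O}_{\tilde X})=h^2(\mathscr{O}_{\tilde X})=0$. Since the singular locus of $X$ is a finite set of ordinary triple points, the whole statement is local around each such point $p$, and I would first fix local analytic coordinates identifying a neighbourhood of $p$ with a hypersurface $\{f=0\}\subset(\mathbb{C}^4,0)$ with $f=f_3+f_{\geq 4}$, where, as recalled above, $\{f_3=0\}\subset\mathbb{P}^3$ is a smooth cubic surface $S$. Blowing up the origin of $\mathbb{C}^4$ and passing to an affine chart $x_1=t,\ x_i=tu_i$, the strict transform is cut out by $f_3(1,u)+t\,f_4(1,u)+\cdots$, its intersection with the exceptional $\mathbb{P}^3$ is exactly $S$, and smoothness of $\tilde X$ along the exceptional divisor $E\cong S$ follows by checking that the differential of this equation never vanishes on $S$; the only input needed is that $\nabla f_3\neq 0$ on $\{f_3=0\}$, which combined with Euler's relation forces one of the chart-derivatives of $f_3$ to be nonzero at each point of $S$. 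Away from the triple points $\sigma\colon\tilde X\to X$ is an isomorphism, so $\tilde X$ is a smooth projective threefold and $\sigma$ is a resolution with reduced exceptional locus the cubic surface $S$.

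For the canonical bundle I would run the discrepancy computation in the blown-up ambient space. Writing $\beta\colon W\to\mathbb{C}^4$ for the blow-up of the origin with exceptional $\mathcal{E}\cong\mathbb{P}^3$, one has $K_W=\beta^*K_{\mathbb{C}^4}+3\mathcal{E}$, while $\beta^*X=\tilde X+3\mathcal{E}$ because $X$ has multiplicity $3$ at $p$; the two copies of $3\mathcal{E}$ cancel, so
\begin{equation*}
K_W+\tilde X=\beta^*\!\left(K_{\mathbb{C}^4}+X\right),
\end{equation*}
and restricting to $\tilde X$ by adjunction gives $K_{\tilde X}=\sigma^*K_X$. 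Thus the blow-up is crepant (discrepancy $0$), and since $K_X=0$ we conclude $K_{\tilde X}=0$. I would also record the consequence, obtained from adjunction $K_E=(K_{\tilde X}+E)|_E=\mathscr{O}_E(E)$ on the smooth $\tilde X$, that $\mathscr{O}_E(E)\cong K_S\cong\mathscr{O}_S(-1)$, so that $\mathscr{O}_E(-E)\cong\mathscr{O}_S(1)$ is ample; this is the geometric fact driving the final step.

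The hard part will be the vanishing $h^1(\mathscr{O}_{\tilde X})=h^2(\mathscr{O}_{\tilde X})=0$, which amounts to proving that an ordinary triple point is a rational singularity. Since $X$ is normal (a hypersurface singularity is Cohen--Macaulay, and being isolated it satisfies Serre's criterion), one has $\sigma_*\mathscr{O}_{\tilde X}=\mathscr{O}_X$, so by the Leray spectral sequence it suffices to show $R^i\sigma_*\mathscr{O}_{\tilde X}=0$ for $i>0$; then $H^i(\tilde X,\mathscr{O}_{\tilde X})\cong H^i(X,\mathscr{O}_X)$, which vanishes for $i=1,2$ by the Calabi--Yau hypothesis on $X$. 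These higher direct images are supported at the triple points, and I would compute them by the formal function theorem, whose completed stalks are $\varprojlim_k H^i(kE,\mathscr{O}_{kE})$. Using the exact sequences
\begin{equation*}
0\longrightarrow\mathscr{O}_E(-kE)\longrightarrow\mathscr{O}_{(k+1)E}\longrightarrow\mathscr{O}_{kE}\longrightarrow 0
\end{equation*}
together with the identification $\mathscr{O}_E(-kE)\cong\mathscr{O}_S(k)$ from the previous step, the vanishing reduces to $H^i(S,\mathscr{O}_S(k))=0$ for $i>0$ and $k\geq 0$. This holds by Kodaira vanishing on the del Pezzo surface $S$, since $\mathscr{O}_S(k)=K_S+(k+1)(-K_S)$ with $-K_S$ ample; an induction on $k$ then yields $H^i(kE,\mathscr{O}_{kE})=0$ for $i=1,2$, while $R^3\sigma_*\mathscr{O}_{\tilde X}=0$ because the fibres of $\sigma$ have dimension at most $2$. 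Combining the three steps, $\tilde X$ is a smooth projective threefold with $K_{\tilde X}=0$ and $h^1(\mathscr{O}_{\tilde X})=h^2(\mathscr{O}_{\tilde X})=0$, hence again a Calabi--Yau threefold.
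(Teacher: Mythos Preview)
Your argument is correct. The paper's own proof is a single sentence invoking that the resolution of an ordinary triple point is crepant, i.e.\ $K_{\tilde X}=\pi^*K_X$, and labels the rest a ``standard fact''; it does not spell out smoothness or the preservation of $h^1(\mathscr{O})=h^2(\mathscr{O})=0$. Your discrepancy computation is exactly the crepancy statement the paper cites, so the core idea coincides; what you add is the explicit verification of smoothness in a blow-up chart and, more substantially, the direct proof that ordinary triple points are rational singularities via the formal function theorem and Kodaira vanishing on the del Pezzo exceptional surface. This is the content the paper leaves implicit, and your treatment of it is clean and self-contained.
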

\begin{proof}
This standard fact is a consequence of a resolution $\pi:\tilde{X}\to X$ of an ordinary triple point being crepant, that is satisfying $K_{\tilde{X}}=\pi^{*}K_X$.
\end{proof}
When we blow up $X$ in $O$ we replace the ordinary triple point with a smooth cubic surface $D$ which is the intersection of $\tilde{X}$ with the exceptional $\mathbb{P}^3$. We recall basic facts about $D$ which we use in the following. For the reference see \cite[V 4.7]{RH}. We think of $D$ as a blow-up of $\mathbb{P}^2$ in six points $P_1,\dots, P_6$ in general position. Let us denote by $h$ the pullback of a hyperplane section of $\mathbb{P}^2$ under this blow up and by $e_i$'s six exceptional divisors over $P_i$'s. Then we have a standard
\begin{lemat}
Picard group $Pic(D)$ of a cubic surface is isomorphic to $\mathbb{Z}^7$. It is generated by $h$ and $e_i$'s. Each curve $C$ on $D$ can be written as $c=ah-\Sigma_{i=1}^6b_ie_i$, for $a,b_i\in \mathbb{Z}$.
\end{lemat}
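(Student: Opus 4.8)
The plan is to reduce the statement entirely to the structure of $D$ as an iterated blow-up of $\mathbb{P}^2$, which we may take as given from \cite[V 4.7]{RH}, and then apply the standard description of the Picard group of a blow-up one point at a time.

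First I would recall that $\operatorname{Pic}(\mathbb{P}^2)\cong\mathbb{Z}$, generated by the class $\ell$ of a line. Let $\sigma\colon D\to\mathbb{P}^2$ denote the blow-down morphism contracting the six exceptional curves, and factor $\sigma$ as a composition of six blow-ups of single points $P_1,\dots,P_6$. At each stage I would invoke the blow-up formula \cite[V 3.2]{RH}: if $\tau\colon S'\to S$ is the blow-up of a smooth projective surface at a point with exceptional curve $E$, then $\operatorname{Pic}(S')\cong\tau^{*}\operatorname{Pic}(S)\oplus\mathbb{Z}E$, where $\tau^{*}$ is injective and $E$ generates a free summand. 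The latter holds because $\tau_{*}E=0$, so by the projection formula $\tau^{*}\delta\cdot E=\delta\cdot\tau_{*}E=0$ for every class $\delta$, whereas $E^{2}=-1$; hence $E$ cannot lie in the image of $\tau^{*}$.

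Applying this inductively across the six blow-ups, each step adjoins exactly one new free generator, so $\operatorname{Pic}(D)\cong\mathbb{Z}^{7}$ with basis $h:=\sigma^{*}\ell$ and the six exceptional classes $e_1,\dots,e_6$; in particular the group is free abelian of rank seven on these generators.

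For the final assertion, since $h,e_1,\dots,e_6$ form a $\mathbb{Z}$-basis of $\operatorname{Pic}(D)$, every divisor class, and in particular the class of any curve $C$ on $D$, is a unique integer combination of them; writing it as $ah-\sum_{i=1}^{6}b_i e_i$ is simply a choice of sign convention for the exceptional coefficients. I do not expect any genuine obstacle: the only points requiring care are the injectivity of each pullback and the independence of each new exceptional class from the previously obtained ones, both encoded in the intersection relations $h^{2}=1$, $e_i\cdot e_j=-\delta_{ij}$, and $h\cdot e_i=0$. The substantive geometric input, namely that the six points are in general position so that $D$ is genuinely a smooth cubic surface, is exactly the content of the cited classification and is assumed from the outset.
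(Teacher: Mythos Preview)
Your proposal is correct and is precisely the standard argument. The paper, however, gives no proof at all: the lemma is introduced as ``a standard'' fact immediately after the reference to \cite[V~4.7]{RH}, and the text moves on without further justification. Your write-up simply unpacks what Hartshorne does in \cite[V~3.2]{RH} and its consequences, so there is no genuine difference in approach---only in the level of detail, with you supplying the argument the paper defers to the reference.
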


We narrow our focus to the specific family of quintic threefolds in $\mathbb{P}^4$ namely those containing a cone $\bar{E}$ over a smooth curve $C$ with $O$ being the vertex of this cone. We aim to describe some aspects of the geometry of their resolutions, their Picard group and their K\"{a}hler cone. In the last section of this paper we will describe in detail the family of quintic threefolds we have managed to construct but for now we proceed with more general statements. 

We begin with a Calabi-Yau quintic hypersurface in $\mathbb{P}^4$ containing a cone $\bar{E}$ over a smooth curve $C$ with vertex of a cone $O$ being a triple point of $\bar{X}$. Note that in this case necessarily we have $C\subset V(F_3)\cap V(F_4)\cap V(F_5)\subset\mathbb{P}^3$ when we write $X=V(F)$ as before. It may happen that apart from $C$ there are isolated points in the intersection $V(F_3)\cap V(F_4)\cap V(F_5)$, we call them excess points. They give rise to lines passing through the triple point of $\bar{X}$ and are otherwise disjoint from $\bar{E}$. Furthermore, threefold $\bar{X}$ can have other singular points than the vertex of the cone but we show that they are ordinary double points admitting a small resolution. To that aim let $\Lambda$ be a system of quintic threefolds containing a cone $\bar{E}$ over a given curve $C$ with triple point at the vertex of the cone. We have

\begin{thm}
For any nonsingular curve $C$ a general element $\bar{X}$ of $\Lambda$  outside of the vertex of the cone has at worst nodes as singularities. Furthermore, the singular points of $\bar{X}$ lie on a cone. 
\end{thm}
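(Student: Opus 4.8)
The plan is to localise the singularities to the base locus of the system $\Lambda$ and then analyse them there. Writing a general member as $\bar X=V(F)$ with $F=u^3F_3+uF_4+F_5$, where $F_3,F_4,F_5$ vanish on $C$ (and $V(F_3)$ is a smooth cubic), the set of such $F$ is the projectivisation of a vector space of quintics, so $\Lambda$ is a genuine linear system and Bertini's theorem guarantees that $\bar X$ is smooth away from the base locus $\mathrm{Bs}(\Lambda)$. The first step is therefore to describe $\mathrm{Bs}(\Lambda)$: every member contains the cone $\bar E$, and the remaining base points in $\mathbb{P}^3$ are precisely the excess points of $V(F_3)\cap V(F_4)\cap V(F_5)$, each contributing a line through $O$. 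Thus $\mathrm{Bs}(\Lambda)$ is $\bar E$ together with finitely many lines through the vertex, and all singularities of a general $\bar X$ lie here.

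Next I would write down the singularity equations along these loci. Along $\bar E$, the vanishing $F_3|_C=F_4|_C=0$ gives $\partial F/\partial u=3u^2F_3+F_4\equiv 0$ automatically, while $F_i|_C=0$ together with Euler's relation forces each gradient $\nabla F_i(q)$ to lie in the two-dimensional conormal space $N^*_{C/\mathbb{P}^3,q}$. Hence $\bar X$ is singular at $(q,u)\in\bar E$ exactly where the conormal-valued section $\Phi(q,u):=u^3\nabla F_3(q)+u\nabla F_4(q)+\nabla F_5(q)$ vanishes. As $\Phi$ is a section of a rank-two bundle over the surface $\bar E$, its zero locus is generically finite, so the singular points lie on the cone $\bar E$ (on finitely many of its rulings), which is the asserted cone. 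On an excess line the same computation reads $u^3\nabla F_3(p)+u\nabla F_4(p)+\nabla F_5(p)=0$ in $\mathbb{C}^4$; for general $F_i$ the three gradients are linearly independent (the intersection being transverse at the isolated point $p$), so no solution with $u$ finite exists and these lines carry no singularities away from $O$.

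It remains to show that each zero of $\Phi$ is an ordinary double point. Choosing local coordinates $(\sigma,\tau,w_1,w_2)$ at a singular point $P_0\in\bar E$ with $\bar E=\{w_1=w_2=0\}$, the fact that $F$ vanishes on $\bar E$ forces the affine equation to take the form $f=\alpha w_1+\beta w_2+O(w^2)$ with no pure $(\sigma,\tau)$ terms, where $(\alpha,\beta)$ are the normal components of $\Phi$. The quadratic part of $f$ at $P_0$ then has a vanishing $(\sigma,\tau)$-block, and a short block-determinant computation shows that its Hessian determinant is a nonzero multiple of $(\det M)^2$, where $M$ is the Jacobian of $(\alpha,\beta)$ in $(\sigma,\tau)$ --- independently of the $O(w^2)$ terms. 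Thus $P_0$ is a node exactly when $\Phi$ vanishes transversally there. I would finish with a parametric transversality (Bertini-type) argument: as $F_4,F_5$ range over the forms vanishing on $C$ the section $\Phi$ moves in a linear family, and for general members its zeros on $\bar E\setminus\{O\}$ are simple. The main obstacle is precisely this last point: one must verify that the available sections generate enough of the twisted conormal bundle $N^*_{C/\mathbb{P}^3}(5)$ to force transversality, and that this richness persists for every nonsingular $C$ for which $\Lambda$ is defined --- the Bertini localisation and the Hessian identity being comparatively formal.
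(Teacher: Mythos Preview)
Your approach is coherent but genuinely different from the paper's, and it has one small error and one real gap that you yourself flag.

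\textbf{The minor error.} Your description of $\mathrm{Bs}(\Lambda)$ is wrong: the excess points of $V(F_3)\cap V(F_4)\cap V(F_5)$ depend on the particular member $(F_3,F_4,F_5)$ and are \emph{not} common to all members of $\Lambda$. Since $F_5$ ranges over all quintic forms vanishing on $C$, any point off $\bar E$ is missed by some member, so $\mathrm{Bs}(\Lambda)=\bar E$. This actually helps you: the excess-line analysis is unnecessary, and Bertini already confines all singularities of a general $\bar X$ to the cone.

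\textbf{Comparison with the paper.} From this point on the two arguments diverge. You pass to local coordinates on $\bar E$, observe that the Hessian of $f$ at a singular point has the block shape
\[
\begin{pmatrix} 0 & M \\ M^{T} & * \end{pmatrix},\qquad \det = (\det M)^2,
\]
and reduce nodality to transversality of the conormal-valued section $\Phi$. This Hessian identity is correct and is a clean, elementary reduction. The paper instead works on the blow-up of $\mathbb{P}^4$ along $\bar E$: it shows that $\pi^*\Lambda - E$ is base-point free, uses an incidence correspondence to get finiteness of the singular locus, shows directly (by perturbing $F_4,F_5$ via multiples of $F_3$) that tangent directions separate, hence singularities are at worst double points of $cA$ type by \cite{KJ}, and then pins them down as ordinary nodes by analysing the normal bundle of the exceptional $\mathbb{P}^1$ in the small resolution, invoking the dichotomy $\mathscr{O}(-1)\oplus\mathscr{O}(-1)$ versus $\mathscr{O}\oplus\mathscr{O}(-2)$ from \cite{GK2}. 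So where you need a transversality statement for sections of $N^*_{C/\mathbb{P}^3}$, the paper substitutes a global small-resolution argument that avoids any bundle-generation hypothesis on $C$.

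\textbf{The real gap.} Your final step --- that for general $(F_3,F_4,F_5)$ the zeros of $\Phi$ on $\bar E\setminus\{O\}$ are simple --- is exactly where the work lies, and your proposal does not supply it. You would need that the restriction maps $H^0(\mathscr{O}_{\mathbb{P}^3}(d))\otimes I_C \to N^*_{C/\mathbb{P}^3}(d)$ for $d=3,4,5$ jointly produce enough sections to make the parametrised family of $\Phi$'s submersive along $\bar E$; this is plausible but not automatic for an arbitrary smooth $C\subset\mathbb{P}^3$, and it is precisely the uniformity-in-$C$ that the paper's small-resolution route buys. If you want to keep your line of argument, one way forward is to imitate the paper's tangent-direction perturbation (adding $F_1F_3$ to $F_4$ and $F_2F_3$ to $F_5$) to show that $\Phi$ can be moved freely in the normal directions at any point of $\bar E\setminus\{O\}$; that would give the needed generic transversality without appealing to surjectivity onto the full twisted conormal bundle.
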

\begin{proof}
Let $\pi:\tilde{\mathbb{P}^4}\to\mathbb{P}^4$ be the blow-up of the $\mathbb{P}^4$ along $\bar{E}$ and let $E$ be the exceptional divisor. $\bar{E}$ is the base locus of $\Lambda$ -  the system of quintic threefolds containing $\bar{E}$ and thus $\pi^*\Lambda-E$ is base-point free. 

Consider the morphism $\Phi:\tilde{\mathbb{P}^4}\to \mathbb{P}^N$ given by the linear system $\pi^*\Lambda-E$. For each point $p$ of $\bar{E}$ we obtain that $\Phi$ embeds $\pi^{-1}(p)$  as a line $\mathbb{P}^1$ in $\mathbb{P}^N$. Let $\mathbb{P}^{N^{*}}$ be the dual projective space to $\mathbb{P}^{N}$. We work in the product $\bar{E}\times  \mathbb{P}^{N^{*}}$ with natural projections $\pi_1$,$\pi_2$. We define $I=\{(p,H):\pi^{-1}(p)\subset H\}$. It follows that $\pi_1^{-1}(p)$ is the set of hyperplanes containing a fixed line so the dimension of $\pi_1^{-1}(p)$ is $N-2$ which says that the set of quintics having point $p$ as a singularity is of codimension $2$. Furthermore we get $\dim I=\dim\bar{E}+N-2=2+N-2=N$ and thus $\dim\pi_2^{-1}(H)=0$ for general $H$. Since $\pi_1(\pi_2^{-1}(H))$ is the singular locus of the general $\bar{X}$ we have that the singular locus is at most zero-dimensional.  

At every point of the cone $\bar{E}$ which is not the vertex we can find quintics which have different tangent directions. To see that observe that if $\bar{X}=V(F)$ is the quintic containing $\bar{E}$ and $P$ is the point on $\bar{E}$ we have $F(P)=0$ where $F=u^2F_3+uF_4+F_5$ and $F_i$ are homogeneous polynomials of degree $i$ as above. We can substitute $\tilde{F}_4=F_4+F_1F_3$ and $\tilde{F}_5=F_5+F_2F_3$ where $F_1$ and $F_2$ are general linear and quadratic homogeneous polynomials thus obtaining equation of another quintic containing the cone $\bar{E}$ but having different tangent direction at $P$. For any $\alpha\in\{x,y,z,t\}$ we obtain $$\frac{\partial \tilde{F}}{\partial \alpha}=\frac{\partial F}{\partial \alpha}+ F_3(u\frac{\partial F_1}{\partial \alpha}+\frac{\partial F_2}{\partial \alpha})+\frac{\partial F_3}{\partial \alpha}(uF_1+F_2).$$ Since $F_3$ is the equation of the smooth cubic surface it is enough that we have $F_2(P)\neq -uF_1(P)$ for the statement to be true. This construction also shows that for each point $P\in \bar{E}$ there exists a quintic containing $\bar{E}$ which is not singular at $P$. By Bertini theorem we conclude that general $\bar{X}$ has at worst double points as singularities but for the vertex of the cone. What is more the singularities lie on the cone $\bar{E}$ and outside of the cone general $\bar{X}$ is smooth.

By following the argument of \cite [Theorem 4.4] {KJ} we conclude that a generic element $\bar{X}$ of $\Lambda$ has only singularities of $cA$ type as for every point $P$ of $\bar{E}$ but for the vertex we can find $\bar{X}_P$ which is not singular at $P$. From \cite[Claim 2.2]{DH},  we have that $P\in \bar{E}$ is a singular point of a general $X\in\Lambda$ if and only if $X'$ such that $\pi(X')=X$ contains the whole fiber $\pi^{-1}(P)$.

Now let $P$ be a singular point (different than the vertex of the cone) of a general quintic $\bar{X}$ containing $\bar{E}$. We blow up the whole $\bar{E}$ and obtain $\mathbb{P}^1$ as a fiber of $\pi$ over $P$. From this and the previous paragraph we see that a general element $X$ of $\Lambda$ admits a small resolution (outside $O$) with $\mathbb{P}^1$ as an exceptional divisor. Using Bertini we see that a general element of $|\pi^*\Lambda-E|$ cuts $E$ along a nonsingular surface and so a normal bundle of $C$ contains a subbundle $\mathscr{O}_C(-1)$. Following the argument from \cite{GK2} Theorem 2.1 we know that in a small resolution of a $cA$ type singularity there is a curve with normal bundle  $\mathscr{O}(-1)\bigoplus\mathscr{O}(-1)$ or $\mathscr{O}\bigoplus\mathscr{O}(2)$. In our case this has to be the former of the two which means the singularities of a generic $\bar{X}$ outside the vertex of the cone are ordinary double points. 

\end{proof}

Our aim is to obtain smooth $X$ and we do so by first blowing up the point $O$  and then performing a small resolution of nodes as ilustrated in the following diagram.
\begin{center}
\begin{tikzcd}
E \arrow[d] \arrow[r, hook]             & X \arrow[d, "\tilde{\pi}"]       \\
\tilde{E} \arrow[d] \arrow[r, hook] & \tilde{X} \arrow[d, "\bar{\pi}"] \\
\bar{E} \arrow[r, hook]               & \bar{X}                 
\end{tikzcd}
\end{center}
Here $\bar{\pi}:\tilde{X}\to X$ is the blow up of $O$ and $\tilde{\pi}: X\to\tilde{X}$ is the subsequent small resolution. In what follows we will use $\pi:X\to\bar{X}$ to denote the composition $\tilde{\pi}\circ\bar{\pi}$.
\subsection{Intersection theory of $X$}

Let $H_X$ be the strict transform of a general hyperplane section of $\bar{X}$ and $H_X'$ the strict transform of the hyperplane section of $\bar{X}$ passing through $O$. Let $E$ be the strict transform of the cone $\bar{E}$ over the curve $C$ on $\bar{X}$ and let $D=\pi^{-1}(O)$ be the exceptional divisor of $\pi$, that is a smooth cubic surface contained in $X$. We denote by $l$ the fibre of $E$. We write $g$ for genus of $C$.
\begin{thm}\label{TPG}
If the Picard group of $X$ is of rank $3$ then it is generated by the divisors $H_X$, $D$, and $E$.
\end{thm}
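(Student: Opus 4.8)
The plan is to prove the generation statement and let linear independence follow formally. Since $X$ is a Calabi--Yau threefold we have $h^{2,0}(X)=0$, so $\operatorname{Pic}(X)=\operatorname{NS}(X)$ sits inside $H^2(X,\mathbb{Z})$, which is torsion free because $X$ is simply connected. Hence under the rank hypothesis $\operatorname{Pic}(X)\cong\mathbb{Z}^3$, and any three classes that generate it automatically form a basis; so it suffices to show that $H_X$, $D$ and $E$ generate $\operatorname{Pic}(X)$. Because $X$ is smooth I may identify $\operatorname{Pic}(X)=\operatorname{Cl}(X)$ and argue entirely with Weil divisor class groups, which has the virtue of being insensitive to modifications in codimension two.

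First I would set $U=X\setminus(D\cup E)$ and invoke the excision sequence for class groups. As $D$ and $E$ are the only two irreducible divisors removed, this reads
\[
\mathbb{Z}[D]\oplus\mathbb{Z}[E]\longrightarrow\operatorname{Cl}(X)\longrightarrow\operatorname{Cl}(U)\longrightarrow 0,
\]
so that $\operatorname{Cl}(U)\cong\operatorname{Pic}(X)/\langle[D],[E]\rangle$. Consequently the three classes generate $\operatorname{Pic}(X)$ precisely when the image of $H_X$ generates $\operatorname{Cl}(U)$, and the whole problem reduces to computing $\operatorname{Cl}(U)$ and checking that $H_X$ maps to a generator.

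Next I would identify $U$ with an open subset of $\bar X$. The vertex $O$ lies on the cone, so $D=\pi^{-1}(O)\subset\pi^{-1}(\bar E)$, and by the previous Theorem the remaining singularities of $\bar X$ are nodes lying on $\bar E$; hence the exceptional curves of the small resolution $\tilde\pi$ also lie over $\bar E$. Therefore $\pi$ restricts to an isomorphism $\pi^{-1}(\bar X\setminus\bar E)\xrightarrow{\sim}\bar X\setminus\bar E$, and $U$ differs from $\pi^{-1}(\bar X\setminus\bar E)$ only in those node curves, which are of codimension two. Since class groups are unchanged by removing codimension-two loci, this gives $\operatorname{Cl}(U)\cong\operatorname{Cl}(\bar X\setminus\bar E)\cong\operatorname{Cl}(\bar X)/\mathbb{Z}[\bar E]$, the last isomorphism being the excision sequence on $\bar X$ for the divisor $\bar E$.

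The argument thus collapses to the single statement $\operatorname{Cl}(\bar X)=\mathbb{Z} H\oplus\mathbb{Z}\bar E$, for then $\operatorname{Cl}(U)=\mathbb{Z} H$ is generated by the image of $H_X$ and generation follows. This is the main obstacle, and it is really a statement about the \emph{index}, not the rank. The rank part is easy: the two removed divisors $D$ and $E$ are independent in $\operatorname{Pic}(X)$ — which one detects by intersecting against the ruling $l$ and a line of the cubic surface $D$ — so the excision sequence forces $\operatorname{rk}\operatorname{Cl}(U)=1$ and hence $\operatorname{rk}\operatorname{Cl}(\bar X)=2$, spanned rationally by $H$ and $\bar E$. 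The delicate point is to promote this to an integral equality: I must rule out that $H$ and $\bar E$ span only a finite index subgroup and that $\operatorname{Cl}(U)$ carries torsion. For this I would exploit that $\bar X$ is a general member of the system $\Lambda$ and run a Noether--Lefschetz type argument showing that the class group of such a quintic is generated exactly by the hyperplane section and the cone. Controlling this index, rather than the rank, is where the real work lies.
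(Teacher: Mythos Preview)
Your approach is quite different from the paper's, and in a sense you are aiming higher than the paper actually does. The paper's proof is a two-line linear independence check: assuming $\alpha H_X+\beta D+\gamma E=0$ with $\alpha,\beta,\gamma\in\mathbb{Q}$, one intersects with $H_X^2$ and with $H_X$ and reads off $\alpha=\beta=\gamma=0$ from the intersection numbers of the preceding lemmas. Under the standing hypothesis $\operatorname{rk}\operatorname{Pic}(X)=3$, this immediately gives that $H_X,D,E$ span $\operatorname{Pic}_{\mathbb{Q}}(X)$. That is all the paper establishes, and it is all that is used downstream: the description of the K\"ahler cone, the identification of the contractions, and the intersection computations all take place in $\operatorname{Pic}_{\mathbb{R}}(X)$ or $\operatorname{Pic}_{\mathbb{Q}}(X)$, so the integral index question never arises.

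You, by contrast, go after integral generation via excision and a comparison $\operatorname{Cl}(U)\cong\operatorname{Cl}(\bar X)/\mathbb{Z}[\bar E]$. The reduction is clean, and you correctly isolate the genuine obstacle: showing $\operatorname{Cl}(\bar X)=\mathbb{Z} H\oplus\mathbb{Z}\bar E$ \emph{integrally}, not just rationally. But you do not carry this out; invoking ``a Noether--Lefschetz type argument'' is a placeholder, and for a singular quintic with prescribed cone and nodes this would require real work (essentially controlling the defect at the level of lattices, not just ranks). So as written your proposal is incomplete for the integral statement, while for the rational statement the paper's direct intersection argument is both shorter and sufficient. If you only need what the paper needs, drop the excision machinery and just check linear independence over $\mathbb{Q}$ against $H_X^2$ and $H_X$; if you want the honest integral statement, be aware that the paper does not prove it either.
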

We limit ourselves to the case when $\text{rank}$ of the Picard group of $X$ is $3$ as this is the case for all the threefolds we construct in the last section. Also, we are inclined to believe this should always be the case when $X$ is the resolution of $\bar{X}$ containing exactly one cone and one ordinary triple point being its vertex. To prove the theorem we need to calculate the intersections of divisors with themselves and with curves on $X$. We provide these in the following lemmas.
\begin{lemat}
On $X$ we have
\begin{enumerate}
\item $H_X.D=0$
\item $H_X.E=C+deg(C)l$
\item $D.E=C$
\item $E^2=K_E=-2C+(K_C-deg(C))l$
\item $E^3=K_E^2=8(1-g)$
\item $D^2=K_D$
\item $D^3=3$
\end{enumerate}
\end{lemat}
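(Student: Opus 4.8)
The plan is to get (4) and (6) for free from adjunction, reduce (5) and (7) to standard surface invariants, dispatch (1) and (3) by direct geometry of the two blow-downs, and finally extract the quantitative content of (2) and of the explicit shape of $K_E$ in (4) from a single computation on the ruled surface $E$. Since $K_X=0$, adjunction on the divisors $E$ and $D$ gives $K_E=(K_X+E)|_E=E|_E$ and $K_D=(K_X+D)|_D=D|_D$; interpreting $E^2$ and $D^2$ as the self-restrictions $E|_E$ and $D|_D$, these are exactly (4) and (6). Granting these, I would restrict once more: $E^3=(E|_E)^2=K_E^2$ and $D^3=(D|_D)^2=K_D^2$, so (5) and (7) reduce to evaluating $K_E^2$ and $K_D^2$. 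The surface $D$ is the smooth cubic cut out on the exceptional $\mathbb{P}^3$ of $\bar\pi$, a del Pezzo of degree $3$, whence $K_D^2=3$ and (7) follows; and $K_E^2=8(1-g)$ is the standard value of $K^2$ for a geometrically ruled surface over a curve of genus $g$, giving (5).

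Next the two geometric identities. For (1), a general hyperplane section $\bar H$ of $\bar X$ avoids the isolated point $O$ and the finitely many nodes, so its strict transform is $H_X=\pi^{*}\bar H$ and is disjoint from the exceptional surface $D=\pi^{-1}(O)$; hence $H_X\cdot D=0$. For (3), blowing up the vertex $O$ of the cone replaces $O$ by the $\mathbb{P}^3$ of tangent directions, in which $D$ is the cubic surface and the strict transform of $\bar E$ meets $D$ precisely along the curve of tangent directions of the cone, namely the base curve $C\subset\mathbb{P}^3$. The subsequent small resolution $\tilde\pi$ has its exceptional curves over the nodes of $\bar E$, which are fibres of the ruling and are disjoint from $D$, so $D\cdot E=C$ is unchanged. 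These exceptional fibres also exhibit $E\to C$ as a genuine $\mathbb{P}^1$-bundle, so that $\mathrm{Num}(E)$ has basis $\{C,l\}$ with $C\cdot l=1$ and $l^2=0$, where $C=D\cap E$ is the section furnished by (3).

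Now comes the quantitative heart. Writing $H_X|_E\equiv\alpha C+\beta l$ in $\mathrm{Num}(E)$, I would pin down the coefficients together with $C^2$ from three numbers computed in $X$: first $(H_X|_E)\cdot l=H_X\cdot l=1$, since a ruling line meets a general hyperplane once; second $(H_X|_E)^2=H_X^2\cdot E=\deg\bar E=\deg C$, since two general hyperplanes cut the degree-$\deg C$ cone $\bar E$ in $\deg C$ points away from $O$ (using $H_X=\pi^{*}\bar H$ and the projection formula); and third $(H_X|_E)\cdot C=H_X\cdot C=0$, since $C\subset D$ and $H_X\cdot D=0$ by (1). Expanding these three relations in the basis $\{C,l\}$ yields simultaneously $\alpha=1$, $\beta=\deg C$ (which is (2)), and $C^2=-\deg C$. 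Feeding $C^2=-\deg C$ into adjunction on $E$ then produces the explicit form of $K_E$: adjunction on a fibre forces $K_E\cdot l=-2$, so the section-coefficient is $-2$, while adjunction on the section, $2g-2=(K_E+C)\cdot C$, fixes the fibre-coefficient; together these give $K_E=-2C+(K_C-\deg C)l$, completing (4), and squaring recovers $K_E^2=8(1-g)$ in (5).

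The step I expect to be most delicate is the identification $C^2=-\deg C$ together with the claim that $C=D\cap E$ is exactly the section contracted by $\bar\pi$, that is, that blowing up the vertex realises $E$ as the ruled surface whose negative directrix is $C$. One must check that the small resolution of the nodes does not perturb this: the point is that every exceptional curve of $\tilde\pi$ lies over a node of $\bar E$ and is itself a fibre of the ruling, so it neither meets $D$ nor alters the classes $C$ and $l$, and the three intersection numbers above are insensitive to them. A secondary technical point is that it suffices to work in $\mathrm{Num}(E)$, which has rank two, so that expanding $H_X|_E$ in $\{C,l\}$ and solving the resulting linear system is legitimate for the numerical statements we need.
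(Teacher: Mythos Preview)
Your argument is correct and largely parallels the paper's: items (1), (3)--(7) are handled identically via adjunction on the Calabi--Yau and the standard invariants of the del Pezzo cubic and of a geometrically ruled surface. The genuine difference is in (2). The paper invokes the linear equivalence $H_X\sim D+H_X'$, where $H_X'$ is the strict transform of a hyperplane section through $O$, and reads off $H_X\cdot E=D\cdot E+H_X'\cdot E=C+\deg(C)\,l$ directly from the geometry of the cone (a hyperplane through the vertex cuts the cone in $\deg C$ rulings). You instead expand $H_X|_E$ numerically in the basis $\{C,l\}$ and solve for the coefficients from the three numbers $H_X\cdot l=1$, $H_X^2\cdot E=\deg C$, $H_X\cdot C=0$. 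The paper's route is one line shorter; yours has the advantage of yielding $C^2=-\deg C$ and the explicit shape $K_E=-2C+(K_C-\deg C)l$ in the same stroke, both of which the paper postpones --- the former to the next lemma as a ``standard fact for ruled surfaces'', the latter it never actually derives, simply citing Hartshorne for the value $K_E^2=8(1-g)$.

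One small imprecision worth flagging: you assert that the exceptional curves of the small resolution $\tilde\pi$ ``are fibres of the ruling'' of $E$. They are not --- in the paper's notation these curves have class $r$ with $E\cdot r=1$, whereas a fibre $l$ has $E\cdot l=-2$, so $r$ and $l$ are distinct numerical classes. This does not damage your proof, since the only fact you actually use about these curves is that they are disjoint from $D$ (they lie over nodes of $\bar X$ away from $O$), so neither $D\cdot E=C$ nor your three intersection numbers on $E$ are disturbed. You should simply drop the phrase and the sentence that follows it; that $E\to C$ is a $\mathbb{P}^1$-bundle is part of the standing hypothesis that $E$ is a smooth ruled surface, not a consequence of the small resolution.
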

\begin{proof}
General hyperplane section of $\bar{X}$ misses the point $O$ and thus $H_X$ misses $D$ by the properties of the pullback which proves (1). The exceptional divisor $D$ intersected with $E$ is precisely the base curve $C$  \cite[V.2.11.4]{RH} and thus $D.E=C$ hence (2). Hyperplane passing through $O$ cuts $\bar{E}$ in exactly $deg(\bar{E})=deg(C)$ fibers. By \cite[Corollary 9.12] {EH}, we can think of the class $D$ as a class $H_X-H_X'$ thus $H_X\sim D+H_X'$ and so $H_X.E=(D+H_X').E=C+deg(C)l$. We calculate $E^2$ from the adjunction formula. $K_E=(K_X+E)|_E=E^2$ and similarily $K_E^2=(K_X+E)^2|_E=(K_X^2+2K_XE+E^2)|_E=E^2|_E=E^3$  since $K_X=0$. Again by \cite{RH} we know that $E^3=8(1-g)$. Similarily from adjunction $D^2=K_D$ and since $D$ is a smooth cubic surface $K_D\sim -H_D$ where $H_D$ is a hyperplane section of a cubic. This is a standard use of adjunction as $K_D\sim \mathscr{O}_D(-3-1+deg(D))=\mathscr{O}_D(-1)\sim -H_D$. Since $D^3\sim K_D^2$ we obtain $D^3\sim(-H_D)^2=3$ which completes the proof.
\end{proof}

\begin{lemat}
We have 
\begin{enumerate}
\item $l^2=0$
\item $C^2=-deg(C)$
\item $l.C=1$
\item $l.E=-2$
\item $C.E=K_C+deg(C)$
\item $D.C=-deg(C)$
\end{enumerate}
\end{lemat}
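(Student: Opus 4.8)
The plan is to reduce every entry to an intersection computed on one of the two surfaces $E$ or $D$ carrying the relevant curve, feeding in the divisor intersections already established in the previous lemma. The basic tool is the projection formula $\gamma.S = \gamma.(S|_S)$, valid for a curve $\gamma$ contained in a smooth divisor $S$ on $X$, together with the identification $S|_S = S^2$ as a class on $S$.

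First I would record the two identities coming purely from the ruled structure of $E$. Since $E\to C$ is a $\mathbb{P}^1$-bundle with $l$ a fibre and $C$ the section produced by the blow-up of the vertex, distinct fibres are disjoint and the section meets each fibre in a single point; hence $l^2=0$ and $l.C=1$, which are (1) and (3), both computed on the surface $E$.

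Next I would obtain $C^2$ from adjunction on $E$. Writing $2g-2 = C.(C+K_E)$ and substituting $K_E = E^2 = -2C + (K_C-\deg C)l$ from the previous lemma (reading $K_C$ as the degree $2g-2$), and using $l.C=1$, the identity collapses to $C^2 = -\deg C$, giving (2). With $C^2$ in hand the three remaining numbers follow from the projection formula. Restricting $E$ to itself and using $E|_E = E^2$ gives
\[
l.E = l.\bigl(-2C+(K_C-\deg C)l\bigr) = -2, \qquad
C.E = C.\bigl(-2C+(K_C-\deg C)l\bigr) = K_C+\deg C,
\]
which are (4) and (5). Finally, since $C = D\cap E\subset D$ and $D$ is a smooth cubic surface with $D|_D = D^2 = K_D = -H_D$, while $C$ is a curve of degree $\deg C$ in the exceptional $\mathbb{P}^3$ containing $D$, one gets $D.C = C.K_D = -C.H_D = -\deg C$, which is (6).

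I expect no substantial obstacle here; the only point requiring genuine care is the overloaded notation, in which $C$ and $l$ appear both as $1$-cycles on $X$ and as divisor classes on the surfaces $E$ and $D$. One must consistently specify the surface on which each self-intersection is taken and interpret $K_C$ as the degree $2g-2$ throughout. Once this bookkeeping is fixed, each of the six equalities becomes a single substitution into the formulas of the previous lemma.
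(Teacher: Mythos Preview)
Your proposal is correct and matches the paper's approach: the paper simply defers (1)--(4) to \cite[V.2]{RH} as standard ruled-surface facts and proves (5) and (6) exactly as you do, via $C.E=C.K_E$ and $D.C=K_D.C=-H_D.C$. One small remark: your adjunction argument for (2) is formally valid but tautological, since the expression $K_E=-2C+(K_C-\deg C)\,l$ you import from the previous lemma already encodes the invariant $e=\deg C$, i.e.\ $C^2=-\deg C$; the independent justification is that $C$ is the exceptional section of the blow-up of the cone $\bar E$ at its vertex.
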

\begin{proof}
Points (1)-(4) are standard facts for ruled surfaces and are for example found in \cite[V.2]{RH}. To obtain (5) we can restrict $C.E_{|E}$ and thus $C.K_E=C.(-2C+(K_C-deg(C)l)=2deg(C)+K_C-deg(C)=K_C+deg(C).$ By the similar argument $D.C=K_D.C=-H_D.C=-deg(C)$.
\end{proof}
We can proceed with
 \begin{proof}[Proof of \ref{TPG}]
We need to show that $H_X, D$ and  $E$ are lineary independent in $Pic_{\mathbb{Q}}(X)$. To this end assume that $T=\alpha H_X+\beta D+\gamma E=0$ for $\alpha,\beta,\gamma\in\mathbb{Q}$. But then also $T.H_X^2=0$ and so $5\alpha+deg(C)\gamma =0$ thus $\alpha=-\frac{1}{5}deg(C)\gamma$. As $T.H_X=0$ we obtain $\alpha H_X^2+\gamma (C+deg(C)l)=0$ and so $\gamma (-\frac{1}{5}deg(C)H^2_X+C+deg(C)l)=0$. If $\gamma\neq 0$ we obtain $H^2_X=5\frac{C+deg(C)l}{deg(C)}$. Since $deg(H^2_X)=5$ this cannot happen. Thus $\gamma=0$, hence $\alpha=0$ and $\beta=0$ making $H_X,D$ and $E$ linearly independent as desired. 
\end{proof}

Let $r$ denote the class of $\mathbb{P}^1$ coming from the small blow-up of a node of $X$ lying on the ruled surface. Recall that $V(F_3)\cap V(F_4) \cap V(F_5)$ contains $C$ and some excess points, thus $\bar{X}$ contains a cone $\bar{E}$ over $C$ and lines over excess points. We use $t$ to denote the class of a strict transform of these lines in $X$. 
\begin{lemat}
On $X$ as above we have
\begin{enumerate}
\item $H_X.r=0$
\item $D.r=0$
\item $E.r=1$
\end{enumerate}
Furthermore, we have numerical equivalence of curves $t\sim l+2r$.
\end{lemat}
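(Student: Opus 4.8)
My plan is to handle the three intersection numbers and the curve relation separately, since (1) and (2) are disjointness statements about cycles, whereas (3) and the final numerical equivalence require the local geometry of the small resolution. For (1), I would argue that a general hyperplane section $\bar H$ of $\bar X$ avoids the finitely many nodes of $\bar X$ (which lie on the cone, away from $O$); its strict transform $H_X$ is then disjoint from the exceptional curve $r$, so $H_X.r=0$. For (2), the curve $r$ lies over a node $P\neq O$, whereas $D=\pi^{-1}(O)$ lies over the vertex, and the small resolution $\tilde\pi$ is an isomorphism near $O$; hence $D$ and $r$ are disjoint and $D.r=0$.

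The substantive point is (3), which I would settle by passing to the analytic local model $xy=zw$ of the ordinary double point $P$ on $\tilde X$. Since the cone is smooth away from its vertex, $\tilde E$ is smooth at $P$ and passes through it as one of the two rulings of the quadric cone. Of the two small resolutions, related by a flop, one blows up $\tilde E$ at $P$ — giving a strict transform that is $\tilde E$ with an extra $(-1)$-curve and satisfying $E.r=-1$ — while the other leaves $E\cong\tilde E$ and meets $r$ transversally in a single point. Because every preceding lemma treats $E$ as the \emph{smooth ruled surface} over $C$ (with $E^2=-2C+(K_C-\deg(C))l$ and $E.l=-2$), and because it is precisely this ruled structure that the type III morphism $\pi$ contracts, the relevant resolution is the latter, and transversality gives $E.r=1$. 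I expect the main obstacle to be exactly this sign, i.e. identifying which of the two flopped resolutions is geometrically correct; I would pin it down by the requirement that $E$ remain the ruled surface used throughout. As a consistency check, $N_{r/X}=\mathscr{O}(-1)\oplus\mathscr{O}(-1)$ from the preceding theorem forces $K_X.r=0$, compatible with $X$ being Calabi--Yau.

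For the equivalence $t\sim l+2r$, I would use that $X$ is a smooth projective Calabi--Yau threefold, so a $1$-cycle class is determined by its intersection numbers against a basis of $\operatorname{Pic}(X)\otimes\mathbb{Q}$; by Theorem \ref{TPG} that basis is $H_X,D,E$, and it suffices to check that $t$ and $l+2r$ pair identically with these three divisors. The intersections with $l$ follow by restriction to the ruled surface: from $H_X.E=C+\deg(C)l$, together with $l^2=0$ and $C.l=1$, one gets $H_X.l=1$, from $D.E=C$ one gets $D.l=C.l=1$, and $E.l=-2$ is already recorded. For $t$, the strict transform of a line $\overline{OQ}$ over an excess point $Q\in V(F_3)\cap V(F_4)\cap V(F_5)$, I would note that $\overline{OQ}$ is a line meeting a general hyperplane once, giving $H_X.t=1$; that its tangent direction at $O$ is $Q\in V(F_3)=D$, so its strict transform meets the exceptional cubic transversally once, giving $D.t=1$; and that $Q\notin C=D\cap E$, while $\overline{OQ}$ is disjoint from $\bar E$ off the vertex, giving $E.t=0$. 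Comparing via (1)--(3), $(l+2r).H_X=1+0=H_X.t$, $(l+2r).D=1+0=D.t$, and $(l+2r).E=-2+2\cdot 1=0=E.t$; hence $t\sim l+2r$.
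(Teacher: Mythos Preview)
Your argument is correct and follows the same route as the paper. For (1) and (2) your disjointness reasoning matches the paper's almost verbatim; for the numerical equivalence $t\sim l+2r$, the paper simply points to Table~\ref{tabint} (which records $H_X.t=1$, $E.t=0$, $D.t=1$), so your explicit computation of those three intersections is exactly what underlies that reference.

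The one place where you go beyond the paper is (3). The paper writes only that ``$E.r=1$ is clear'' because $r$ replaces a smooth point of $\bar E$, whereas you correctly isolate the flop ambiguity: in the local model $xy=zw$ the two small resolutions give $E.r=\pm 1$, and you pin down the sign by the standing hypothesis that $E$ remains the ruled surface over $C$ (so $r\not\subset E$ and the intersection is transverse). This is a genuine clarification of a point the paper leaves implicit; the underlying geometry is the same, but your version explains \emph{why} the stated sign is the right one rather than asserting it.
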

\begin{proof}
As $r$ is the $\mathbb{P}^1$ replacing the point lying directly on $\bar{E}$ where $\bar{E}$ is smooth the equality $E.r=1$ is clear. General hyperplane section of $\bar{X}$ misses this point giving $H_X.r=0$ and since $D$ is the blow-up of the vertex of $\bar{E}$ it misses the $r$ as well. One can easily derive the numerical equivalence class of $t$ by looking at the Table \ref{tabint} describing the intersection of curves and divisors on $X$.
\end{proof}

We need to consider the curves on $X$ that lie on the cubic surface $D$. Since $H_{X|D}=0$, $D_{|D}=-H_D$, $E_{|D}=C$ we see that restriction of $Pic(X)$ to $Pic(D)$ is at most two dimensional (it may happen that curve $C$ is the multiple of hyperplane section of $D$ and then it is one dimensional). We describe the intersection of curves on $D$ with divisors of $X$. We have $H_D=3h+\Sigma e_i$ and $C=ah+\Sigma b_ie_i$ for $a, b_i\in \mathbb{Z}$. Then we show

\begin{lemat}
For $h$, and $e_i$, $i\in 1,\dots,6$ generators of $Pic(D)$ we have the following equivalences
\begin{enumerate}
\item $h\sim \frac{-3}{-3a+\Sigma b_i}C+(a+\frac{3(a^2-\Sigma b_i^2)}{-3a+\Sigma b_i})r$
\item $e_i\sim \frac{1}{3}C+(b-\frac{a^2-\Sigma b_i^2}{3})r.$
\end{enumerate}
\end{lemat}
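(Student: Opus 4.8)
The plan is to regard each generator $h, e_1, \dots, e_6$ of $\operatorname{Pic}(D)$ as a curve class on $D\subset X$ and to express it in the two curve classes $C$ and $r$, by pairing against the divisors $H_X$, $D$, $E$ that generate $\operatorname{Pic}(X)$ by Theorem \ref{TPG}. The key observation is that any curve $\gamma$ contained in $D$ satisfies $H_X.\gamma=(H_{X|D}).\gamma=0$ because $H_{X|D}=0$; since also $H_X.C=0$ and $H_X.r=0$, all of $h$, the $e_i$, $C$ and $r$ lie in the hyperplane $H_X^{\perp}$ of $N_1(X)_{\mathbb Q}$. As $X$ has Picard rank $3$, this subspace is two-dimensional, so once I check that $\{C,r\}$ is a basis of it, every such class has a unique expression $\lambda C+\mu r$ which is determined by its intersection numbers with $D$ and $E$.

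First I would confirm that $C$ and $r$ are independent: pairing a relation $xC+yr=0$ with $D$ and $E$ and using $D.C=-\deg C$, $D.r=0$, $E.C=a^2-\Sigma b_i^2$ and $E.r=1$ from the preceding lemmas gives a linear system of determinant $\deg C\neq 0$, forcing $x=y=0$. Next I would compute the relevant intersection numbers by restricting to the cubic surface via $D_{|D}=-H_D$ and $E_{|D}=C$ and using the standard pairing $h^2=1$, $e_i^2=-1$, $h.e_i=e_i.e_j=0$; with $H_D=3h+\Sigma e_i$ this yields $D.h=-H_D.h=-3$, $E.h=C.h=a$, and $D.e_i=-H_D.e_i=1$, $E.e_i=C.e_i=-b_i$.

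Writing $h\sim\lambda C+\mu r$ and intersecting with $D$ and $E$ then gives the triangular system
\begin{align*}
(-3a+\Sigma b_i)\lambda &= -3,\\
(a^2-\Sigma b_i^2)\lambda+\mu &= a,
\end{align*}
where the first equation decouples because $D.r=0$; solving yields $\lambda=\tfrac{-3}{-3a+\Sigma b_i}$ and $\mu=a+\tfrac{3(a^2-\Sigma b_i^2)}{-3a+\Sigma b_i}$, which is (1), and the identical procedure applied to $e_i$ produces (2). The calculation itself is routine linear algebra; the points needing care are verifying that $C$ and $r$ really do span $H_X^{\perp}$ (so that the expressions exist and are unique) and fixing the sign conventions on $D$ so the restriction computations are internally consistent---a convenient cross-check being that the classes assigned to $h$ and the $e_i$ must recombine, via $H_D=3h+\Sigma e_i$, into the class of the hyperplane section $H_D$.
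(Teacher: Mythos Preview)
Your approach is exactly the paper's: the paper's proof simply says ``use Table \ref{tabint} to deduce the equivalences'' and notes the denominator $3a-\Sigma b_i=\deg(C)\neq 0$, which is precisely the linear-algebra computation you spell out---pairing $h$ and $e_i$ against $D$ and $E$ inside the two-dimensional subspace $H_X^\perp$ of $N_1(X)_{\mathbb Q}$ spanned by $C$ and $r$. One caution: your intermediate values $D.e_i=1$ and $E.e_i=-b_i$ disagree in sign with Table \ref{tabint} (which records $D.e_i=-1$ and $E.e_i=b_i$); this stems from the paper's inconsistent conventions for $H_D$ and $C$ in terms of $h,e_i$, so when you carry out the ``identical procedure'' for (2) you should read the numbers directly off the table rather than recompute them from $H_D=3h+\Sigma e_i$.
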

\begin{proof}
Again we use Table \ref{tabint} to deduce the equivalences.
The equivalence classes of $h$ and $e_i$ are well defined as long as $3a-\Sigma b_i\neq 0$ but this expression is exactly $deg(C)$ by \cite{RH} and so never $0$.
\end{proof}

\subsection{K\"{a}hler cone of X}
We keep the notation of the previous section. We proceed with the description of the K\"{a}hler cone $K$ of $X$. Recall that the K\"{a}hler cone of a manifold $X$ is spanned in $Pic(X)$ by ample divisors, that is divisors $T$ such that $T.c>0$ for each effective curve $c$ on $X$. The closure $\bar{K}$ of $K$ includes also nef divisors that is divisors $T$ for which $T.c\geq 0$. By the Fact 1 in \cite{PMHW1} divisors that lie on codimension one faces of $\bar{K}$ give primitive contractions of Calabi-Yau threefolds, providing they do not belong to the cubic cone that is $W=\{ T\in Pic_{\mathbb{R}}(X):T^3=0\}$. 
For simplicity we write $H$ instead of $H_X$ to denote the pullback of the hyperplane section of $\bar{X}$ on $X$.
We claim the following:
\begin{thm}\label{KC}
Let $X$ be as above with Picard group generated by $H, D$ and $E$. Then the closure $\bar{K}$ of the K\"{a}hler cone $K$ is a convex hull of three rays. Two of the rays are generated by divisors $H$ and $H-D$ and don't lie on the cubic cone $W$.
\end{thm}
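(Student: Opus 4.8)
The plan is to describe $\bar K$ as the dual of the cone of effective curves $\overline{NE}(X)$, reading every intersection number off the lemmas above. Writing a test divisor in the basis from Theorem \ref{TPG} as $T=\alpha H+\beta D+\gamma E$, I would pair it against the curves that ought to generate $\overline{NE}(X)$: the ruling $l$ of $E$, the exceptional $\mathbb{P}^1$'s $r$ of the small resolution, and the curves lying on the cubic surface $D$. The lemmas give at once $T.r=\gamma$ and $T.l=\alpha+\beta-2\gamma$; and since $H|_D=0$, $D|_D=-H_D$ and $E|_D=C$, any line $\ell\subset D$ (for which $H_D.\ell=1$) satisfies $T.\ell=-\beta+\gamma\,(C.\ell)$.

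Next I would show these three families already cut out $\bar K$. The cone of curves of the cubic surface $D$ is generated by its twenty-seven lines, so $T|_D$ is nef on $D$ as soon as $T.\ell\ge 0$ for every line; because $\gamma=T.r\ge 0$ on the region of interest, the tightest of these is the inequality for the line $\ell_0$ minimising $C.\ell$, and $T.C\ge 0$ (as well as $T.t\ge 0$ for $t\sim l+2r$) then follows formally. Thus $\bar K$ is contained in the cone $\bar K_0$ cut out by $\gamma\ge 0$, $\alpha+\beta-2\gamma\ge 0$ and $-\beta+\gamma\,(C.\ell_0)\ge 0$. The three normal vectors $(0,0,1)$, $(1,1,-2)$, $(0,-1,C.\ell_0)$ have determinant $-1\ne 0$, so $\bar K_0$ is a simplicial cone: the convex hull of three rays.

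Solving the defining equations in pairs gives the rays explicitly. The pair $\gamma=0,\ \alpha+\beta=0$ yields the ray through $H-D$; the pair $\gamma=\beta=0$ yields the ray through $H$; the remaining pair yields a third ray proportional to $(2-C.\ell_0)H+(C.\ell_0)D+E$. This already identifies two of the rays as $H$ and $H-D$. To upgrade the inclusion $\bar K\subseteq\bar K_0$ to an equality I must check that all three generators are nef. Here $H=\pi^{*}\mathscr{O}_{\bar X}(1)$ is a pullback of an ample class, hence nef; and $H-D$ is the strict transform of a hyperplane through $O$, namely the pullback of $\mathscr{O}_{\mathbb{P}^3}(1)$ under the morphism resolving the projection of $\mathbb{P}^4$ away from the vertex $O$, hence also nef. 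For the third ray I would use the type III contraction $X\to Y$ of the hypothesis: the face of $\bar K$ on which $l$ is contracted is the pullback of $\overline{\mathrm{Nef}}(Y)$, a two-dimensional cone since $Y$ has Picard rank $2$, and its edge meeting the face that contracts $\ell_0$ is the third generator, which is therefore nef. Finally, for the last assertion I would compute, using $H.D=0$ (so $H^2D=HD^2=0$) and $D^3=3$, that $H^3=\deg\bar X=5$ and $(H-D)^3=H^3-3H^2D+3HD^2-D^3=5-3=2$; both are nonzero, so neither $H$ nor $H-D$ lies on $W=\{T^3=0\}$.

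The step I expect to be the real obstacle is the completeness claim in the second paragraph, that $l$, $r$ and the lines on $D$ exhaust the generators of $\overline{NE}(X)$ — equivalently, that the outer cone $\bar K_0$ equals $\bar K$ rather than merely containing it. Everything rests on producing, for each of the three faces, an honest morphism contracting exactly the expected curves: the blow-down $\pi\colon X\to\bar X$ and the type III contraction $X\to Y$ handle two of them, while the third requires the contraction of the extremal line $\ell_0\subset D$ to exist as a codimension-one face map. Establishing that this last contraction exists (and is of the type I or type II anticipated in Section 1) is exactly what makes the third generator nef and pins $\bar K$ down to the simplicial cone claimed.
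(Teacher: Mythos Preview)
Your approach is essentially the paper's: both arguments test a general divisor $T=\alpha H+\beta D+\gamma E$ against the curve classes $r$, $l$, and curves on $D$, read off the three bounding hyperplanes, and identify $H$ and $H-D$ as two of the extremal rays. The paper phrases the search for the third ray as finding the $\beta$ at which $Q_{|D}=(-\beta H_D+C)$ ceases to be ample via Hartshorne's criterion on the cubic surface, which is exactly your ``line $\ell_0$ minimising $C.\ell$'' formulation; your parametrisation $(2-C.\ell_0)H+(C.\ell_0)D+E$ agrees with the paper's $L=(2-\beta)H+\beta D+E$.

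Two points of comparison are worth recording. First, your computation $(H-D)^3=H^3-D^3=5-3=2$ is correct; the paper expands $(H-D)^3$ with the wrong signs and obtains $8$, though of course either value suffices for the claim $H-D\notin W$. Second, the gap you single out --- that $l$, $r$, and the lines on $D$ really exhaust the extremal rays of $\overline{NE}(X)$, equivalently that the third generator is genuinely nef --- is precisely the step the paper also leaves implicit: it asserts ``It turns out divisor $L$ is exactly the third ray'' without verifying nefness against curves not lying on $E$ or $D$. Your proposed fix via the existence of the type~III contraction is mildly circular in the paper's logic (the contractions are derived \emph{from} the cone description in the subsequent theorem), so if you want to close this you should instead argue directly: invoke the cone theorem for the $K_X$-trivial threefold $X$, or observe that any irreducible curve not contained in $E\cup D$ meets $H$, $H-D$ and $L$ non-negatively because $H$ is a pullback of an ample class, $H-D$ is the pullback of a hyperplane under the projection from $O$, and $L$ restricts to a nef class on the complement of $E$.
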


\begin{proof}
First note $H$ and $H-D$ are nef. It is enough to check the intersections with curves $l,c$ and $r$ to see they are all non-negative. Now observe that $H^3=5>0$ and $(H-D)^3=H^3+3H^2.D+3H.D^2+D^3=5+3=8>0$ so indeed these divisors do not lie on the cubic cone $W$. If we write divisors in $Pic(X)$ as $\alpha H+\beta D+\gamma E$ we see that any divisor such that $\gamma=0$ has zero intersection with $r$ and divisors $H$ and $H-D$ satisfy this condition. Since $H.D=H.C=H.r=0$ and $H-D.l=H-D.r=0$ we indeed have each of them contracting something else then $r$ an thus they have to lie on two sides of the cone. 

Let us analyse the side of the cone on which there are divisors $Q$ such that $Q.l=0$. In particular $H-D$ lies on this side. It is straightforward to see that divisors $Q$ have to be of the form $(2\gamma-\beta)H+\beta D+\gamma E$ for $\beta, \gamma\in\mathbb{R}$ (with some additional conditions on $\beta$ and $\gamma$ to maintain nefness which we do not check for the moment). When $\gamma=0$ we recover $H-D$ or a multiple of thereof and so we can focus on the case when $\gamma\neq 0$. Since we are working on a cone we can assume $\gamma=1$ (it cannot be negative as then $Q.r<0$) and so $Q$ is of the form $(2-\beta)H+\beta D+E$. As each such $Q$ has zero intersection with $l$ and a positive intersection with $r$  we can only hope for it to contract some other curve on $D$, be it $C$ or something else. We use the Corollary 4.13 from \cite{RH} to find the value of $\beta$ for which $Q_{|D}$ stops being ample and denote this divisor $L$. We know $Q_{|D}\sim ah+\Sigma b_ie_i$ is ample on $D$ if and only if $b_i>0$, $a>b_i+b_j$, and $2a>\Sigma_{i\neq j}b_i$ for all $i,j$. Note that $Q_{|D}=-\beta H_D+C$ and so for any given $C$ the calculations are straightforward. It turns out divisor $L$ is exactly the third ray spanning the K\"{a}hler cone of $X$. Since $H$ has zero intersection with the whole $D$ any linear combination of $H$ and $L$ has to be zero on some set of curves on $D$ (or $D$ itself) and thus span the codimension one side of the K\"{a}hler cone. 
\end{proof}
\begin{center}
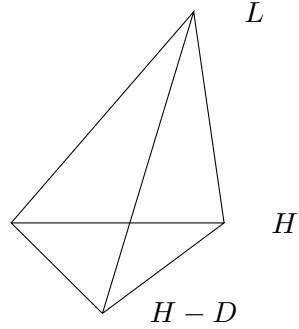
\begin{figure}
\begin{tikzpicture}[scale=.8]
\draw (0,0)--(3.5,0)--(1.5,-1.5)--cycle;
\draw (0,0)--(3,3.5)--(1.5,-1.5) (3,3.5)--(3.5,0);
\node at (3,-1.5){$H-D$};
\node at (4.5,0){$H$};
\node at (4,3.5){$L$};
\end{tikzpicture}
\caption{K\"{a}hler cone of $X$}
\end{figure}
\end{center}

\section{Hodge numbers}
\subsection{Hodge numbers of $X$}
In the following discussion we follow closely the reasoning of \cite{CS3}. The main idea is to combine the results of \cite[Theorem 10] {CS2}, and \cite[Theorem 5]{CS3}, regarding the hypersurfaces in $\mathbb{P}^4$ with only ordinary double points as singularities and hypersurfaces in $\mathbb{P}^4$ with only ordinary triple points as singularities. We use the notion of the equisingular ideal (e.g. from \cite{CS3}). Adapted to our situation it looks as follows:
$$I_{eq}=\bigcap_{i=1}^{\mu}(m_i+\text{Jac} F)\cap(m_0^3+\text{Jac} F)$$
where $m_i$ is the maximal ideal of  a point $P_i$.
We denote $S=\mathbb{C}[X_0,X_1,X_2,X_3,X_4]$ - the polynomial ring in $5$ variables. By $I^{(n)}=I\cap S^n$ we denote the degree $n$ summand of an ideal in a graded ring $S$. 
We use $\delta$ to denote the defect of $X$ that is the difference between the expected and the actual dimension of an ideal of $2d-5$ degree ($5$ in our case) forms vanishing in the singular locus of $X$. Thus $\delta=\dim I_{eq}^{(5)} - ({{2d-1}\choose{4}} -11\mu_3-\mu_2)$ where $\mu_i$ is the number of points of multiplicity $i$ on $\bar{X}$ for $i\in\{2,3\}$ (note that $\mu_3=1$ in our case). We dedicate this section to showing the following

\begin{thm}\label{CHN}
Let $X$ be a threefold obtained from resolution of a quintic threefold $\bar{X}$ with ordinary double and triple points as only singularities (resolution being small for double points). Then:
\begin{enumerate}
\item $h^{0,0}(X)=h^{3,3}(X)=h^{3,0}(X)=1$
\item $h^{1,0}(X)=0$
\item$ h^{1,1}(X)=1+\mu_3+\delta$ 
\item $h^{1,2}(X)= {{2d-1}\choose{4}} -5{ {d}\choose{4}}-11\mu_{3} -\mu_{2}+\delta$ 
\end{enumerate}
\end{thm}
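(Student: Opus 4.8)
The plan is to compute each Hodge number by relating the resolution $X$ to the singular quintic $\bar X$ and assembling contributions from the smooth quintic, the double points, and the triple point separately. For the structural Hodge numbers in (1) and (2), I would first invoke Lemma~1.1 (the blow-up of an ordinary triple point is crepant) together with the fact that small resolutions of nodes are crepant, so that $X$ is a smooth Calabi-Yau threefold. Then $h^{0,0}=h^{3,3}=1$ is trivial, $h^{3,0}=h^{0}(\mathscr{O}_X(K_X))=h^0(\mathscr{O}_X)=1$ follows from $K_X=0$, and $h^{1,0}=h^1(\mathscr{O}_X)=0$ is exactly part of the Calabi-Yau hypothesis. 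These require essentially no computation once crepancy is in hand.

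The substance lies in (3) and (4). My approach would be to treat the resolution in two stages matching the diagram: the small resolution $\tilde\pi:X\to\tilde X$ of the nodes, and the blow-up $\bar\pi:\tilde X\to\bar X$ of the triple point. For the Picard number $h^{1,1}$, I would use the exact sequence / defect description already set up in the excerpt: each node contributes to $h^{1,1}$ through the defect $\delta$, while blowing up the triple point replaces $O$ by a cubic surface $D$, adding its contribution. I would argue that $h^{1,1}(X)=1+\mu_3+\delta$ by starting from $h^{1,1}=1$ for the ambient (the hyperplane class), adding one exceptional divisor class for the triple point blow-up (hence the $\mu_3$, recalling $\mu_3=1$ here), and adding $\delta$ for the extra divisor classes created by small resolutions of nodes — this is precisely the interpretation of the defect as the failure of the singular hypersurface to be $\mathbb{Q}$-factorial. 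I would cite \cite[Theorem 10]{CS2} for the nodal contribution and \cite[Theorem 5]{CS3} for the triple point contribution, combining them additively since the singularities are disjoint.

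For $h^{1,2}$, the strategy is to compute the Euler characteristic of $X$ and then solve for $h^{1,2}$ using the known values of the other Hodge numbers. Concretely, $\chi(X)=2(h^{1,1}-h^{1,2})$ for a Calabi-Yau threefold, so once $h^{1,1}$ is known it suffices to determine $\chi(X)$. I would compute $\chi(X)$ by comparing with the smooth quintic: $\chi$ of a smooth quintic is $-200=2(1-101)$, and I would track how the Euler characteristic changes under the small resolutions of the $\mu_2$ nodes (each node on the smooth model versus its $\mathbb{P}^1$ replacement) and under the blow-up of the triple point (replacing a point by the cubic surface $D$ with $\chi(D)=9$). Alternatively, and more in the spirit of \cite{CS3}, I would directly invoke the formula for $h^{1,2}$ of a hypersurface with prescribed singularities in terms of the Jacobian ring, where the binomial term $\binom{2d-1}{4}-5\binom{d}{4}$ is the smooth primitive middle cohomology, the corrections $-11\mu_3-\mu_2$ record the cohomology absorbed by the singular points, and $+\delta$ records the defect. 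The main obstacle will be justifying the exact coefficients $11$ and $1$ and the appearance of $\delta$ symmetrically in both (3) and (4): this requires a careful local analysis of the mixed Hodge structure at an ordinary triple point (the cubic surface contributes via its own Hodge numbers) and at a node, together with the global statement that the equisingular ideal $I_{eq}$ controls both the defect and the loss of middle cohomology. I would isolate this local-to-global bookkeeping as the heart of the proof and present the binomial arithmetic only after the structural identities are established.
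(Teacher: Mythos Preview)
Your treatment of (1) and (2) is fine and matches the paper. The real issue is in (3) and (4), where your proposal defers exactly the step that constitutes the proof.

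You write that you would ``cite \cite[Theorem~10]{CS2} for the nodal contribution and \cite[Theorem~5]{CS3} for the triple point contribution, combining them additively since the singularities are disjoint.'' This additivity is not available for free: the defect $\delta$ is defined via the single global ideal $I_{eq}=\bigcap_{i=1}^{\mu}(m_i+\operatorname{Jac}F)\cap(m_0^3+\operatorname{Jac}F)$, which entangles the nodes and the triple point in degree $5$. There is no a~priori reason the contribution to $h^{1,1}$ (or to $h^{1,2}$) from resolving the nodes should be governed by the same $\delta$ once a triple point is also present. The two cited theorems each assume only one kind of singularity; reconciling them is the content of the theorem, not a citation. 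Your order of computation also makes this harder: establishing $h^{1,1}=1+\mu_3+\delta$ directly would require proving that the small resolutions create exactly $\delta$ new divisor classes for this mixed $\delta$, which is no easier than computing $h^{1,2}$.

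The paper proceeds in the opposite order and does the cohomological work explicitly. It adapts Cynk's logarithmic-differentials machinery to the mixed situation: Proposition~1 is restated with point~(6) modified to $H^i(\Omega^4_{\tilde{\mathbb{P}}^4}(2X))\cong H^i(\Omega^4_{\mathbb{P}^4}(2\bar X)\otimes\mathscr{J}_{P'+3P_0})$, reflecting the two resolution types simultaneously. Two lemmas produce exact sequences ending in $H^1(\Omega^3_{\tilde{\mathbb{P}}^4}(X))$ and $H^1(\Omega^3_X(X))$, and these are assembled into a commutative diagram whose maps $\theta,\eta,\beta,\xi,\phi$ compare jets at $P_0$ with evaluations at the nodes. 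The key computation is $\dim\operatorname{Ker}\phi$, which identifies $\operatorname{Im}\beta\cap\operatorname{Im}\eta$ with $(I_{eq}/(\bigcap m_i\cap m_0^3))^{(5)}$; this is where the mixed equisingular ideal enters and where the coefficients $11$ and $1$ are actually pinned down. From this one reads off $h^{1,2}=\dim I_{eq}^{(5)}-5\binom{d}{4}$, and only then is $h^{1,1}$ recovered via $e(X)=e(\bar X)+2\mu_2+24\mu_3$ and $e(X)=2(h^{1,1}-h^{1,2})$. Your proposal correctly names ``local-to-global bookkeeping'' as the heart of the proof, but the diagram chase with the mixed ideal sheaf is that bookkeeping, and it is not bypassed by an Euler-characteristic argument.
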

First two identities are standard for quintic threefolds, the last two we show after the preparatory discussion. 
Let $P_0$ be the triple point while $P_1,\dots, P_{\mu}$ are the nodes of $\bar{X}$. Denote $P=\text{Sing}(\bar{X})=\Sigma_{i=0}^{\mu} P_i$, $P'=P-P_0$. Analogously $E_i=\pi^{*}(P_i)$ and $E=\Sigma_{i=0}^{\mu} E_i$. Also denote $E'=E-E_0$. 
From the properties of the blow-up we have
$$K_{\mathbb{\tilde{P}}^4}\sim\pi^{*}K_{\mathbb{P}^4}+3E$$
$${X}\sim\pi^{*}\bar{X}-2E'-3E_0$$
Here we recall Proposition 1. of \cite{CS3} with slightly adapted point $(6)$. 
\begin{prop}[Proposition 1, \cite{CS3}]
\
\begin{enumerate}
\item $\pi_{*}\mathscr{O}_{\tilde{\mathbb{P}}^4}(-mE)\cong\mathscr{J}_{mP}$, for $m\geq 0$
\item $R^i\pi_{*}\mathscr{O}_{\tilde{\mathbb{P}}^4}(-mE)=0$, for $i\neq 0, m\geq 0$
\item $H^i(\mathscr{O}_X)=0$, for $i=1,2$
\item $H^1(\Omega^3_{\mathbb{P}^4})=0$, for $i\leq 2$.
\item $H^i(\Omega^4_{\tilde{\mathbb{P}}^4}(X)\cong H^i(\Omega^4_{{\mathbb{P}}^4}(\bar{X}))$
\item $H^i(\Omega^4_{\tilde{\mathbb{P}}^4}(2X)\cong H^i\Omega^4_{\mathbb{P}^4}((2\bar{X})\otimes\mathscr{J}_{P'+3 P_0})$
\end{enumerate}
\end{prop}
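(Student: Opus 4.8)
The plan is to read all six assertions as cohomological statements about the blow-up $\pi:\tilde{\mathbb{P}}^4\to\mathbb{P}^4$ of the finite reduced scheme $P=P_0+\dots+P_\mu$, and to deduce them from three ingredients: the two divisor-class identities already recorded, $K_{\tilde{\mathbb{P}}^4}\sim\pi^{*}K_{\mathbb{P}^4}+3E$ and $X\sim\pi^{*}\bar{X}-2E'-3E_0$; the projection formula; and the Leray spectral sequence for $\pi$. The one geometric input I would use throughout is that each component $E_i$ of the exceptional divisor is a $\mathbb{P}^3$ with $\mathscr{O}_{E_i}(E_i)\cong\mathscr{O}_{\mathbb{P}^3}(-1)$, and that the $E_i$ are pairwise disjoint, so that every computation localizes to the blow-up of $\mathbb{A}^4$ at a point.

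I would establish (1) and (2) first, since they feed the rest. Filtering $\mathscr{O}_{\tilde{\mathbb{P}}^4}(-mE)$ by the subsheaves $\mathscr{O}(-(k+1)E)\subset\mathscr{O}(-kE)$ produces successive quotients $\mathscr{O}(-kE)|_{E_i}\cong\mathscr{O}_{\mathbb{P}^3}(k)$ for $0\le k\le m-1$. Since $H^{j}(\mathbb{P}^3,\mathscr{O}(k))=0$ for every $j>0$ and $k\ge 0$, the associated long exact sequences give $R^{j}\pi_{*}\mathscr{O}(-mE)=0$ for $j>0$, which is (2); in degree zero the same filtration identifies the sections with the functions vanishing to order at least $m$ at each $P_i$, that is $\mathscr{J}_{mP}$, which is (1). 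Item (4) is then immediate from Bott's formula, which gives $H^{i}(\mathbb{P}^4,\Omega^3_{\mathbb{P}^4})=0$ for $i\neq 3$ and in particular for $i\le 2$. For (3) I would use the restriction sequence $0\to\mathscr{O}_{\tilde{\mathbb{P}}^4}(-X)\to\mathscr{O}_{\tilde{\mathbb{P}}^4}\to\mathscr{O}_X\to 0$ together with the pushforward above and the vanishing of the intermediate cohomology of $\mathbb{P}^4$; equivalently, $h^{0,1}$ and $h^{0,2}$ are birational invariants, hence agree with those of the Calabi-Yau resolution, where they vanish by definition.

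The heart of the argument, and the step carrying the adaptation in (6), is (5) together with (6). Using $\Omega^4_{\tilde{\mathbb{P}}^4}=K_{\tilde{\mathbb{P}}^4}$ and substituting the two class identities, one computes $K_{\tilde{\mathbb{P}}^4}+kX\sim\pi^{*}(K_{\mathbb{P}^4}+k\bar{X})+(3-2k)E'+(3-3k)E_0$, so the exceptional part is $E'$ when $k=1$ and $-E'-3E_0$ when $k=2$. Applying the projection formula $R^{j}\pi_{*}\bigl(\pi^{*}\mathscr{L}\otimes\mathscr{O}(D)\bigr)\cong\mathscr{L}\otimes R^{j}\pi_{*}\mathscr{O}(D)$ reduces each side to a pushforward already controlled above. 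For $k=1$ one has $\pi_{*}\mathscr{O}(E')=\mathscr{O}_{\mathbb{P}^4}$ with higher direct images zero, since the only graded piece on each node-exceptional divisor is the acyclic $\mathscr{O}_{\mathbb{P}^3}(-1)$; Leray then degenerates to give (5). For $k=2$ one has $\pi_{*}\mathscr{O}(-E'-3E_0)=\mathscr{J}_{P'+3P_0}$, again with higher direct images zero, since over $P_0$ the relevant layers are $\mathscr{O}_{\mathbb{P}^3}(0),\mathscr{O}_{\mathbb{P}^3}(1),\mathscr{O}_{\mathbb{P}^3}(2)$, all acyclic; Leray then yields (6).

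I expect the only real friction to be the coefficient bookkeeping in the formula for $K_{\tilde{\mathbb{P}}^4}+kX$, and the need to verify case by case, over the nodes and over the triple point separately, that every graded piece appearing in the relevant filtration is an acyclic line bundle on $\mathbb{P}^3$. Once that acyclicity is confirmed the Leray spectral sequence collapses at the $E_2$ page and both isomorphisms in (5) and (6) drop out; everything else is formal.
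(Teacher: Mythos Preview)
Your argument is correct and follows the same route as the paper for the one part the paper actually proves. The paper treats (1)--(5) as quoted from \cite{CS3} and only supplies the adapted item (6): it writes the class identity $K_{\tilde{\mathbb{P}}^4}+2X\sim\pi^{*}(K_{\mathbb{P}^4}+2\bar{X})-E'-3E_0$ and invokes the projection formula, exactly as you do, though it records only the $H^0$ statement and leaves the vanishing of higher direct images and the Leray collapse implicit. Your write-up is therefore strictly more self-contained: you reprove (1)--(5) rather than citing, and you make explicit the acyclicity of the graded pieces $\mathscr{O}_{\mathbb{P}^3}(k)$ and the degeneration of Leray that upgrade the $H^0$ identity to all $H^i$. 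There is no substantive difference in strategy, only in level of detail.
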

\begin{proof}
We only need to show (6). As in \cite{CS2} we consider
$$K_{\mathbb{\tilde{P}}^4}+2X\sim\pi^{*}(K_{\mathbb{P}^4}+2\bar{X})-E'-3E_0$$
and thus from the projection formula we obtain
$$H^0(\Omega^4_{\tilde{\mathbb{P}}^4}(2{X})=H^0(\Omega^4_{\mathbb{P}^4}(2\bar{X})\otimes\mathscr{J}_{P'+3 P_0}).$$
\end{proof}

\begin{lemat}
The following sequence is exact
$$H^0\Omega^3_{\mathbb{P}^4}(\bar{X})\longrightarrow H^0(\Omega_{\mathbb{P}^4}^3(\bar{X})\otimes\mathscr{O}_{P_0})\longrightarrow H^1(\Omega^3_{\tilde{\mathbb{P}}^4}(X))\longrightarrow 0$$
\end{lemat}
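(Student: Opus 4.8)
The plan is to compute $H^1(\Omega^3_{\tilde{\mathbb{P}}^4}(X))$ by pushing forward along the blow-up $\pi\colon\tilde{\mathbb{P}}^4\to\mathbb{P}^4$ and then comparing with cohomology on $\mathbb{P}^4$ through the ideal sheaf of the triple point $P_0$. The starting point is the relation $\mathscr{O}_{\tilde{\mathbb{P}}^4}(X)=\pi^{*}\mathscr{O}_{\mathbb{P}^4}(\bar X)\otimes\mathscr{O}(-2E'-3E_0)$ already recorded above, so by the projection formula the direct images $R^q\pi_{*}\Omega^3_{\tilde{\mathbb{P}}^4}(X)$ are controlled by the local sheaves $R^q\pi_{*}(\Omega^3_{\tilde{\mathbb{P}}^4}(-2E'-3E_0))$ twisted by $\mathscr{O}_{\mathbb{P}^4}(\bar X)$. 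I would compute these one exceptional divisor at a time. Each $E_i\cong\mathbb{P}^3$ has conormal bundle $N^{*}_{E_i/\tilde{\mathbb{P}}^4}=\mathscr{O}_{\mathbb{P}^3}(1)$, so from the restriction sequence
$$0\longrightarrow N^{*}_{E_i/\tilde{\mathbb{P}}^4}\longrightarrow\Omega^1_{\tilde{\mathbb{P}}^4}|_{E_i}\longrightarrow\Omega^1_{E_i}\longrightarrow 0$$
one describes $\Omega^3_{\tilde{\mathbb{P}}^4}|_{E_i}$ by taking third exterior powers, and then analyses the Koszul-type filtration obtained by peeling off copies of $E_i$.

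The numerical heart of the matter is a threshold phenomenon that I expect to be the main obstacle. For the nodes, where the twist is $-2E'$, the form $\Omega^3$ should make the local pushforward the full $\Omega^3_{\mathbb{P}^4}$ with no higher direct image, so the nodes contribute nothing; for the triple point, where the twist is $-3E_0$, the form $\Omega^3$ absorbs two of the three powers of $E_0$, leaving exactly the first power of the maximal ideal at $P_0$. Assembling over all centres should give
$$\pi_{*}\Omega^3_{\tilde{\mathbb{P}}^4}(X)=\Omega^3_{\mathbb{P}^4}(\bar X)\otimes\mathscr{J}_{P_0},\qquad R^1\pi_{*}\Omega^3_{\tilde{\mathbb{P}}^4}(X)=0.$$
This is the $\Omega^3$/$X$ analogue of part $(6)$ of the preceding Proposition (which treats $\Omega^4$ and $2X$), and pinning down the exact power of $\mathscr{J}_{P_0}$ is precisely what requires the exterior-power filtration rather than the crude estimate $\pi_{*}\mathscr{O}(-3E_0)=\mathscr{J}_{P_0}^{3}$; this is where I anticipate the real work.

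Granting these two facts, the vanishing of $R^1\pi_{*}$ forces the edge map of the Leray spectral sequence to be an isomorphism, so $H^1(\Omega^3_{\tilde{\mathbb{P}}^4}(X))\cong H^1(\mathbb{P}^4,\Omega^3_{\mathbb{P}^4}(\bar X)\otimes\mathscr{J}_{P_0})$. I would then feed the ideal-sheaf sequence
$$0\longrightarrow\Omega^3_{\mathbb{P}^4}(\bar X)\otimes\mathscr{J}_{P_0}\longrightarrow\Omega^3_{\mathbb{P}^4}(\bar X)\longrightarrow\Omega^3_{\mathbb{P}^4}(\bar X)\otimes\mathscr{O}_{P_0}\longrightarrow 0$$
into its long exact cohomology sequence, whose relevant segment is
$$H^0\Omega^3_{\mathbb{P}^4}(\bar X)\longrightarrow H^0(\Omega^3_{\mathbb{P}^4}(\bar X)\otimes\mathscr{O}_{P_0})\longrightarrow H^1(\Omega^3_{\mathbb{P}^4}(\bar X)\otimes\mathscr{J}_{P_0})\longrightarrow H^1\Omega^3_{\mathbb{P}^4}(\bar X).$$
To truncate this on the right and obtain the stated surjection onto $H^1(\Omega^3_{\tilde{\mathbb{P}}^4}(X))$, I would use the identification $\Omega^3_{\mathbb{P}^4}(\bar X)=\Omega^3_{\mathbb{P}^4}(5)\cong T_{\mathbb{P}^4}$ coming from $\Omega^3_{\mathbb{P}^4}\cong T_{\mathbb{P}^4}(-5)$; then $H^1\Omega^3_{\mathbb{P}^4}(\bar X)=H^1(T_{\mathbb{P}^4})=0$ since $\mathbb{P}^4$ is rigid, and the three-term exact sequence of the lemma follows. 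As a byproduct, homogeneity of $\mathbb{P}^4$ makes the evaluation $H^0(T_{\mathbb{P}^4})\to T_{\mathbb{P}^4}|_{P_0}$ surjective, so in fact $H^1(\Omega^3_{\tilde{\mathbb{P}}^4}(X))=0$, which is presumably the use to which the lemma is put in the Hodge-number count.
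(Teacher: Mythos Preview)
Your overall architecture matches the paper's: establish $\pi_{*}\Omega^3_{\tilde{\mathbb{P}}^4}(X)\cong\Omega^3_{\mathbb{P}^4}(\bar X)\otimes\mathscr{J}_{P_0}$ with vanishing higher direct images, invoke Leray, and finish with the long exact sequence of the ideal-sheaf triangle together with $H^1(\Omega^3_{\mathbb{P}^4}(\bar X))=0$. The one substantive difference is in how the direct-image computation is carried out. The paper does not unwind the conormal filtration by hand; instead it invokes the identification $\pi^{*}\Omega^3_{\mathbb{P}^4}\cong\Omega^3_{\tilde{\mathbb{P}}^4}(\log E)(-3E)$ and the Esnault--Viehweg residue sequence $0\to\Omega^3_{\tilde{\mathbb{P}}^4}(\log E)(-E)\to\Omega^3_{\tilde{\mathbb{P}}^4}\to\Omega^3_E\to 0$, which after twisting by $\mathscr{O}(X)$ packages precisely the ``threshold phenomenon'' you describe (the $-2E'$ at the nodes is absorbed, the $-3E_0$ at the triple point leaves one copy of $\mathscr{J}_{P_0}$) into a single short exact sequence. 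Your Koszul/conormal approach is correct and more elementary, but it requires tracking several pieces of the exterior-power filtration on each $E_i$; the log-differential formalism does this bookkeeping in one stroke and is the standard device in the Cynk papers on which this section is modelled. Your identification $\Omega^3_{\mathbb{P}^4}(\bar X)\cong T_{\mathbb{P}^4}$ for the final vanishing is a clean touch (the paper simply asserts $H^1(\Omega^3_{\mathbb{P}^4}(\bar X))=0$), and your closing remark that evaluation $H^0(T_{\mathbb{P}^4})\to T_{\mathbb{P}^4}|_{P_0}$ is surjective, hence $H^1(\Omega^3_{\tilde{\mathbb{P}}^4}(X))=0$, is correct; the paper does not isolate this vanishing but instead carries the term through an Euler-characteristic style cancellation in the subsequent Hodge-number computation.
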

\begin{proof}
By \cite{CS3} and \cite{CS4} we have 
$$\pi^{*}\Omega^3_{\mathbb{P}^4}\cong\Omega^3_{\tilde{\mathbb{P}}^4}(\log E)(-3E)$$
and 
$$\mathscr{O}_{\tilde{\mathbb{P}}^4}(X)\cong\pi^{*}(\mathscr{O}_{\mathbb{P}^4}(\bar{X}))\otimes\mathscr{O}_{\tilde{\mathbb{P}}^4}(-2E'-3E_0).$$

By \cite[6, 2.3(c)]{EV} , setting $D=D_1=E$ we get the following exact sequence
$$0\longrightarrow\Omega^3_{\tilde{\mathbb{P}}^4}(\log E)(-E)\longrightarrow \Omega^3_{\tilde{\mathbb{P}^4}}\longrightarrow\Omega^3_{E|E}\longrightarrow 0$$
which translates into
$$0\longrightarrow\pi^{*}\Omega^3_{{\mathbb{P}}^4}(2E)\longrightarrow \Omega^3_{\tilde{\mathbb{P}^4}}\longrightarrow\Omega^3_{E|E}\longrightarrow 0$$
and tensoring with $\mathscr{O}_{\tilde{\mathbb{P}}^4}(X)$ we get
$$ 0\longrightarrow\pi^{*}\Omega^3_{\tilde{\mathbb{P}}^4}(\bar{X})\otimes\mathscr{O}_{\tilde{\mathbb{P}^{4}}}(-E_0)\longrightarrow \Omega^3_{\tilde{\mathbb{P}^4}}(X)\longrightarrow\Omega^3_{E_0}(3)\otimes\Omega^3_{E'}(2)\longrightarrow 0$$
Applying the direct image we obtain $\pi_{*}\Omega^3_{\tilde{\mathbb{P}^4}}(\tilde{X})=\Omega^3_{{\mathbb{P}^4}}(X)\otimes\mathscr{J}_{P_0}$.Again by \cite{CS3}
$$H^i\Omega_{\tilde{\mathbb{P}}^4}^3(X)=H^i(\Omega^3_{\mathbb{P}^4}(\bar{X})\otimes\mathscr{J}_{P_0})$$
and
$$R^i\pi_{*}\Omega_{\tilde{\mathbb{P}}^4}(X)=0$$
for $i>0$.
Long exact sequence coming from 
$$0\longrightarrow \Omega^3_{\mathbb{P}^4}(\bar{X})\otimes\mathscr{J}_{P_0}\longrightarrow\Omega^3_{\mathbb{P}^4}(\bar{X})\longrightarrow\Omega^3_{\mathbb{P}^4}(\bar{X})\otimes\mathscr{O}_{P_0}\longrightarrow 0$$
gives us 
$$H^0(\Omega^3_{\mathbb{P}^4}(\bar{X}))\longrightarrow H^0(\Omega^3_{\mathbb{P}^4}(\bar{X})\otimes\mathscr{O}_{P_0})\longrightarrow  H^1(\Omega^3_{\mathbb{P}^4}(\bar{X})\otimes\mathscr{J}_{P_0})\longrightarrow H^1(\Omega^3_{\mathbb{P}^4}(\bar{X}))=0$$
which finishes the proof.
\end{proof}

\begin{lemat}
We have the following exact sequence
$$H^0\Omega^4_{\mathbb{P}^4}(2\bar{X})\longrightarrow H^0(\Omega^4_{\mathbb{P}^4}(2\bar{X})\otimes\mathscr{O}_{P'}\otimes\mathscr{O}_{3P_0})\longrightarrow H^1\Omega^3_{\mathbb{P}^4}(X)\longrightarrow 0$$
\end{lemat}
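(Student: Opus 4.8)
The plan is to follow the template of the preceding lemma, now with $\Omega^3$ and $\bar X$ replaced by $\Omega^4$ and $2\bar X$. First I would write down the structure sequence of the subscheme $P'+3P_0$, that is the nodes $P'$ taken with reduced structure together with the vertex $P_0$ thickened to its third infinitesimal neighbourhood,
$$0\longrightarrow\mathscr{J}_{P'+3P_0}\longrightarrow\mathscr{O}_{\mathbb{P}^4}\longrightarrow\mathscr{O}_{P'}\otimes\mathscr{O}_{3P_0}\longrightarrow 0.$$
Since $\Omega^4_{\mathbb{P}^4}(2\bar X)$ is a line bundle, tensoring preserves exactness and gives
$$0\longrightarrow\Omega^4_{\mathbb{P}^4}(2\bar X)\otimes\mathscr{J}_{P'+3P_0}\longrightarrow\Omega^4_{\mathbb{P}^4}(2\bar X)\longrightarrow\Omega^4_{\mathbb{P}^4}(2\bar X)\otimes\mathscr{O}_{P'}\otimes\mathscr{O}_{3P_0}\longrightarrow 0.$$

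Next I would pass to the associated long exact sequence in cohomology; the asserted sequence is the stretch
$$H^0\Omega^4_{\mathbb{P}^4}(2\bar X)\longrightarrow H^0(\Omega^4_{\mathbb{P}^4}(2\bar X)\otimes\mathscr{O}_{P'}\otimes\mathscr{O}_{3P_0})\longrightarrow H^1(\Omega^4_{\mathbb{P}^4}(2\bar X)\otimes\mathscr{J}_{P'+3P_0})\longrightarrow H^1(\Omega^4_{\mathbb{P}^4}(2\bar X)),$$
so two facts finish the argument. The first is the vanishing $H^1(\Omega^4_{\mathbb{P}^4}(2\bar X))=0$: since $\Omega^4_{\mathbb{P}^4}\cong\mathscr{O}_{\mathbb{P}^4}(-5)$ and $2\bar X$ has degree $10$, this line bundle equals $\mathscr{O}_{\mathbb{P}^4}(5)$, and $H^1(\mathbb{P}^4,\mathscr{O}(d))=0$ for every $d$ by the standard cohomology of line bundles on projective space. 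The second is part (6) of the preceding Proposition, which identifies $H^1(\Omega^4_{\mathbb{P}^4}(2\bar X)\otimes\mathscr{J}_{P'+3P_0})\cong H^1(\Omega^4_{\tilde{\mathbb{P}}^4}(2X))$, so that the penultimate term becomes the right-hand term of the statement.

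Substituting the vanishing and this identification into the long exact sequence collapses it to the claimed surjective three-term sequence ending in $H^1(\Omega^4_{\tilde{\mathbb{P}}^4}(2X))$. I do not expect a genuine obstacle here, the argument being formal once part (6) of the Proposition is in hand; the single point deserving care is the multiplicity bookkeeping in the subscheme. Concretely I would check, against the relation $X\sim\pi^{*}\bar X-2E'-3E_0$ and the projection-formula computation behind that part of the Proposition, that the correct thickening is precisely $P'+3P_0$, reduced at the nodes and of order three at the vertex, so that the middle map really is evaluation of sections together with their $2$-jets at $P_0$, and that the relevant twist is by $\mathscr{O}(5)$ rather than by a bundle carrying nonzero $H^1$.
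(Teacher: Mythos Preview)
Your argument is correct and matches the paper's for the core step: tensor the ideal sequence of the subscheme $P'+3P_0$ by $\Omega^4_{\mathbb{P}^4}(2\bar X)$, pass to cohomology, kill the rightmost term because $H^1(\Omega^4_{\mathbb{P}^4}(2\bar X))=H^1(\mathscr{O}_{\mathbb{P}^4}(5))=0$, and invoke part~(6) of the Proposition to rewrite $H^1(\Omega^4_{\mathbb{P}^4}(2\bar X)\otimes\mathscr{J}_{P'+3P_0})$ as $H^1(\Omega^4_{\tilde{\mathbb P}^4}(2X))$.

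There is, however, one additional identification the paper carries out which you stop short of. The last term in the statement is a misprint; the group actually used in the subsequent commutative diagram is $H^1(\Omega^3_X(X))$, not $H^1(\Omega^4_{\tilde{\mathbb P}^4}(2X))$. The paper supplies this via adjunction, $\Omega^3_X(X)\cong\Omega^4_{\tilde{\mathbb P}^4}(2X)|_X$, together with the restriction sequence
\[
0\longrightarrow\Omega^4_{\tilde{\mathbb P}^4}(X)\longrightarrow\Omega^4_{\tilde{\mathbb P}^4}(2X)\longrightarrow\Omega^3_X(X)\longrightarrow 0
\]
and the vanishing $H^i(\Omega^4_{\tilde{\mathbb P}^4}(X))\cong H^i(\Omega^4_{\mathbb P^4}(\bar X))=H^i(\mathscr{O}_{\mathbb P^4})=0$ for $i=1,2$ from part~(5) of the Proposition, yielding $H^1(\Omega^3_X(X))\cong H^1(\Omega^4_{\tilde{\mathbb P}^4}(2X))$. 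Once you prepend this step, your proof coincides with the paper's.
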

\begin{proof}[Proof of \ref{CHN}]
By the adjunction formula $K_{X}=(K_{\mathbb{\tilde{P}}^4}+{X})_{|{X}}$ we get $\Omega^3_{X}({X})\cong\Omega^4_{\tilde{\mathbb{P}}^4}(2{X})|X$. This means that we can translate the short exact sequence $$0\longrightarrow\Omega^4_{\tilde{\mathbb{P}}^4}(X)\longrightarrow\Omega^4_{\tilde{\mathbb{P}}^4}(2{X})\longrightarrow\Omega^4_{\tilde{\mathbb{P}}^4}(2{X})\otimes\mathscr{O}_{{X}}\longrightarrow 0$$ into 
$$0\longrightarrow\Omega^4_{\tilde{\mathbb{P}}^4}(X)\longrightarrow\Omega^4_{\tilde{\mathbb{P}}^4}(2{X})\longrightarrow\Omega^3_{X}({X})\longrightarrow 0.$$
From that we obtain the long exact sequence with
 $$H^1\Omega^4_{\tilde{\mathbb{P}}^4}(X)\longrightarrow H^1(\Omega^4_{\tilde{\mathbb{P}}^4}(2{X}))\longrightarrow H^1\Omega^3_X({X})\longrightarrow H^2\Omega^4_{\tilde{\mathbb{P}}^4}(X)=0$$ and so $$ H^1\Omega^3_X({X})\cong  H^1(\Omega^4_{\tilde{\mathbb{P}}^4}(2X)\cong H^1\Omega^4_{\mathbb{P}^4}((2\bar{X})\otimes\mathscr{J}_{P'+3 P_0}).$$ using (6) from Proposition 1 for the last isomorphism.
As we have the exact sequence 
$$0\longrightarrow \Omega_{\mathbb{P}^4}^4(2X)\otimes\mathscr{J}_{P'+3 P_0}\longrightarrow\Omega_{\mathbb{P}^4}^4(2\bar{X})\longrightarrow\Omega_{\mathbb{P}^4}^4(2\bar{X})\otimes\mathscr{O}_{P'+3 P_0}\longrightarrow 0$$
the derived one finishes the proof.
\end{proof}

\begin{proof}
In the following commutative diagram, by the preceeding lemmas we have exact rows.
\begin{tikzcd}
S^{\oplus5}/\mathbb{C} \arrow[d, "\cong", shift right]                            & \mathbb{C}^{4} \arrow[d, "\cong", shift left]                                                   &                                                           &   \\
H^{0}(\Omega^3_{\mathbb{P}^4}(\bar{X})) \arrow[d, "\xi"] \arrow[r, "\theta"] & H^0(\Omega_{\mathbb{P}^4}(\bar{X})\otimes\mathscr{O}_{P_0}) \arrow[d, "\beta"] \arrow[r, "\alpha"]           & H^1(\Omega^3_{\tilde{\mathbb{P}}^4}(X)) \arrow[d, "\phi"] \arrow[r] & 0 \\
H^0(\Omega^4_{\mathbb{P}^4}(2\bar{X})) \arrow[r, "\eta"]                     & H^0(\Omega_{\mathbb{P}^4}(2\bar{X})\otimes\mathscr{O}_{P'}\otimes\mathscr{O}_{3P_0}) \arrow[r, "\gamma"] & H^1(\Omega^3_{X}(X)) \arrow[r]             & 0 \\
S^5 \arrow[u, "\cong"]                                                 & \mathbb{C}^{15+\mu_2} \arrow[u, "\cong"]                                                                  &                                                           &  
\end{tikzcd}
\\
Again, we adapt the proof from \cite{CS3}. Let $K_i$ be the contraction with the vector field $\frac{\partial}{\partial \bar{X}_i}$ and $\Omega=\Sigma_{i=0}^{4}(-1)^id\bar{X}_0\wedge\dots\wedge d\Hat{\bar{X}}_i\wedge\dots\wedge \bar{X}_4$.
The isomorphisms in the first column are described as follows. Linear polynomials $(A_0,\dots, A_4)\in S^{\oplus 5}$ map to $\Sigma _{i=0}^4 \frac{A_i}{F}K_i\Omega\in H^0(\Omega_{\mathbb{P}^4}^3(\bar{X}))$ and degree $5$ homogeneous polynomial $A\in S^5$ goes to $\frac{A}{F^2}\Omega\in H^0(\Omega_{\mathbb{P}^4}^4(2\bar{X}))$. The homomorphism $\eta$ associates to polynomial $A$ its 3-jets at point $P_0$ and evaluates it at points $P_1,\dots, P_\mu$. Similarily $\theta$ assigns value at $P_0$ to quintuples of linear polynomials. 
The morphism $\xi$ is the exterior derivative 
$$d(\Sigma_{i=0}^{4}\frac{A_i}{F}K_i\Omega)=\frac{1}{F^2}\Sigma_{i=0}^{4}(F\frac{\partial A_i}{\partial \bar{X}_i}-A_i\frac{\partial F}{\partial \bar{X}_i})\Omega$$
indeed identifying quintuplet of degree $1$ polynomials with one degree $5$ polynomial

We are to calculate $\dim(\text{Ker}\Phi)$. Note that $h^1(\Omega^3_X(X))=h^1(\Omega^3_{\tilde{\mathbb{P}}^4}(X))-\dim \text{Ker}\Phi + \dim\text{Coker}\Phi$. On the other hand for any $b\in H^0(\Omega_{\mathbb{P}^4}(\bar{X})\otimes\mathscr{O}_{P_0})$ we see that $(\Phi\circ\alpha)(b)=0\iff(\gamma\circ\beta)(b)=0$. As $\beta$ is injective we can, abusing notation, write $b\in H^0(\Omega_{\mathbb{P}^4}(2\bar{X})\otimes\mathscr{O}_{P'}\otimes\mathscr{O}_{3P_0})$. But then $\gamma(b)=0\iff b\in\text{Im}(\eta)$. From that $h^1(\Omega^3_X(X))=\dim \text{Im}(\beta )-\dim (\text{Im}(\beta )\cap \text{Im}(\eta))+\dim\text{Coker}(\beta\circ\gamma)$. But $\dim\text{Coker}\Phi=\dim\text{Coker}(\beta\circ\gamma)$ because it is the dimension of the component of $H^1(\Omega^3_X(X))$ that is exactly not in the image of $\Phi$ and since $\alpha$ is surjective and $\beta$ injective this is the same as the component not in the image of $\beta\circ\gamma$. Putting the equalities together we obtain $\dim(\text{Ker}\Phi)=h^1(\Omega^3_{\tilde{\mathbb{P}}^4}(X))-\dim(\text{Im}\beta)+\dim(\text{Im}(\beta)\cap\text{Im}(\eta))$
By \cite{CS3} Lemma 2. we have the following formula for $h^{1,2}(X)$:
$$h^{1,2}(X)=h^0(\Omega^3_{X}(X))-h^0(\Omega^3_{\tilde{\mathbb{P}^4}}(X))+\dim \text{Ker}(H^1\Omega^3_{\tilde{\mathbb{P}^4}}(X)\to H^1\Omega^3_{X}(X))$$
and putting it together with the above result we get 
$$h^{1,2}(X)=h^0(\Omega^3_{X}(X))-h^0(\Omega^3_{\tilde{\mathbb{P}^4}}(X))+h^1(\Omega^3_{\tilde{\mathbb{P}}^4}(X))-\dim(\text{Im}\beta)+\dim(\text{Im}(\beta)\cap\text{Im}(\eta))$$

From the discussion above we obtain $\text{Im}(\beta)\cap\text{Im}(\eta)=(I_{eq}/ (\bigcap_{i=1}^{\mu}m_i\cap m_0^3))^{(5)}$. Then we calculate
\begin{equation*}
\begin{split}
 h^{1,2}(X)=   & h^0(\Omega^3_{X}(X))-h^0(\Omega^3_{\tilde{\mathbb{P}^4}}(X))+h^1(\Omega^3_{\tilde{\mathbb{P}}^4}(X))-4+\dim(I_{eq}/ (\bigcap_{i=1}^{\mu}m_i\cap m_0^3))^{(5)})=  \\
& h^0(\Omega_{\mathbb{P}^4}^4(2\bar{X})\otimes\mathscr{J}_{3P_0+P'})-h^0(\Omega^4_{\bar{\mathbb{P}}^4}(X))-h^0(\Omega^3_{{\mathbb{P}}^4}(\tilde{X})\otimes\mathscr{J}_{P_0})+ \\ 
& h^1(\Omega^3_{{\mathbb{P}}^4}(\tilde{X})\otimes\mathscr{J}_{P_0})-4+\dim(I_{eq}/ (\bigcap_{i=1}^{\mu}m_i\cap m_0^3)^{(5)})= \\
& \dim((\bigcap_{i=1}^{\mu}m_i\cap m_0^3)^{(5)})-h^0(\Omega^4_{\bar{\mathbb{P}}^4}(X))+ \\
& h^0(\Omega^3_{\mathbb{P}^4}(\tilde{X})\otimes\mathscr{O}_{P_0})-h^0(\Omega^3_{\mathbb{P}^4}(\tilde{X}))-4+\dim(I_{eq}/ (\bigcap_{i=1}^{\mu}m_i\cap m_0^3)^{(5)})= \\ 
&\dim(I_{eq})^{(5)}-{{d-1}\choose{4}}+4-5{ {d}\choose{4}}+{{d-1}\choose{4}}-4=\\ 
&\dim(I_{eq})^{(5)}-5{ {d}\choose{4}}= {{d-1}\choose{4}} -5{ {d}\choose{4}}-11\mu_{3} -\mu_{2}+\delta.
\end{split}
\end{equation*}
Again following \cite{CS4} we remark that $e(X)=e(\bar{X})+2\mu_2+24\mu_3$ as the Milnor number at an ordinary triple point is 16 and the resolution replaces it with a cubic surface with Euler number 9; also the small resolution of a double point with Milnor number $1$ replaces it with a $\mathbb{P}^1$ with Euler number $2$. Thus, using the fact that $e(X)=2(h^{1,1}(X)-h^{1,2}(X))$ we obtain $h^{1,1}(X)=h^{1,1}(\bar{X})-h^{1,2}(\bar{X})+h^{1,2}(X)+\mu_2+12\mu_3= \dim(I_{eq})^{(5)}-{{2d-1}\choose{4}}+12\mu_3+1=1+\mu_3+\delta$.
\end{proof}

\subsection{Hodge numbers of $\tilde{Y}$} 

We recall a definition from \cite{MR}.
\begin{defi}
A process $T(X,Y,\tilde{Y})$ of going from $X$ to $\tilde{Y}$ where $\pi:X\to Y$ is a birational contraction of a Calabi-Yau $X$ onto a normal variety $Y$ and $\tilde{Y}$ is a smoothing of $Y$ is called a geometric transition. A transition is called conifold if $Y$ admits only ordinary double points as singularities. 
\end{defi}

In what follows we distinguish a curve $C$ over which the surface $E$ is ruled and the curve $\tilde{C}$ to which it is being contracted. The curves are birational and in particular $g(C)=g(\tilde{C})$. We use the Theorem 1.3 from\cite{MG2}:
\begin{thm}
Let $\pi:X\to Y$ be a primitive type III contraction of a non-singular Calabi-Yau threefold X contracting a divisor E to a curve $\tilde{C}$. If $g(\tilde{C})\geq 1$ then Y is smoothable.
\end{thm}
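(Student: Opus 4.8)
The statement is Gross's smoothability criterion, so in the paper it is simply invoked; the proof I would reconstruct runs through the deformation theory of the singular threefold $Y$. The first step is to pin down the singularities of $Y$. Since $\pi$ is a primitive type III contraction collapsing the divisor $E$ onto $\tilde{C}$, the general fibre $\ell\cong\mathbb{P}^1$ of $E\to\tilde{C}$ is contracted, and because $\pi$ is crepant one has $K_X\cdot\ell=0$, so by adjunction $\deg N_{\ell/X}=-2$; splitting off the trivial direction $N_{\ell/E}\cong\mathscr{O}$ gives $N_{\ell/X}\cong\mathscr{O}\oplus\mathscr{O}(-2)$. In a transverse slice each fibre is therefore a $(-2)$-curve, and contracting it produces an $A_1$ surface singularity. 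Hence $Y$ is singular exactly along $\tilde{C}$ with a transverse $A_1$ (that is, $cA_1$) singularity, which is in particular a hypersurface singularity; the whole problem is to globalise the evident local smoothing of these singularities along all of $\tilde{C}$.

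The plan is to study the cotangent cohomology sheaves $\mathcal{T}^1_Y$ and $\mathcal{T}^2_Y$, both supported on $\tilde{C}$, together with the local-to-global spectral sequence $E_2^{p,q}=H^p(Y,\mathcal{T}^q_Y)\Rightarrow \mathbb{T}^{p+q}_Y$ controlling the deformations of $Y$, where $\mathbb{T}^1_Y$ is the space of first-order deformations and $\mathbb{T}^2_Y$ carries the obstructions. Because the singularities are hypersurface singularities one has $\mathcal{T}^2_Y=0$, and for the transverse $A_1$ situation $\mathcal{T}^1_Y$ is identified with a single line bundle $L$ on $\tilde{C}$. The low-degree terms of the spectral sequence then give the exact sequence $0\to H^1(\Theta_Y)\to\mathbb{T}^1_Y\to H^0(L)\to H^2(\Theta_Y)$. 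A first-order deformation is a smoothing of $Y$ precisely when its image in $H^0(L)$ is a section of $L$ with no zeros on $\tilde{C}$ (for $A_1$ every nonzero element of the one-dimensional local $T^1$ is a smoothing direction). I would then need two things: the existence of such a nowhere-vanishing section, and its unobstructed integration to an honest one-parameter smoothing of $Y$.

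Both requirements are where the genus hypothesis enters, and verifying them is the main obstacle. The first is a positivity statement about $L$: one computes $\deg L$ on $\tilde{C}$ from the normal-bundle data of $E$ and the canonical class, and shows that $g(\tilde{C})\geq 1$ forces $L$ to have degree (and generation) large enough to admit a section vanishing nowhere — this is exactly the threshold at which the contracted fibres can be deformed away simultaneously over the whole curve, and it fails for $g=0$. The second requirement means killing the obstruction landing in $\mathbb{T}^2_Y$; here I would either invoke the $T^1$-lifting principle of Ran and Kawamata, reducing unobstructedness to a semicontinuity statement for $\mathcal{T}^1$ along the deformation, or relate $H^2(\Theta_Y)$ to the cohomology of the smooth $X$ through the crepant contraction $\pi$, once more using $g(\tilde{C})\geq 1$ to obtain the required vanishing.

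A complementary route, valid in the range $g(\tilde{C})>1$, is to use the fact recalled above that a general type III contraction then deforms to a type I contraction: smoothability of the nodal image of a type I contraction is classical, governed by Friedman's condition on the relations among the vanishing cycles, and one transports this conclusion back to $Y$. The content of the theorem, however, is that it also covers $g(\tilde{C})=1$, which lies outside the deformation-to-type-I argument and genuinely requires the direct cotangent-cohomology computation sketched above. I expect the delicate point to be precisely the degree bookkeeping for $L$ that makes $g\geq 1$, rather than $g>1$, the sharp hypothesis.
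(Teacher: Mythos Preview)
Your direct attack on $\operatorname{Def}(Y)$ through the cotangent complex is a legitimate strategy, and is close in spirit to Gross's original argument in \cite{MG2}, but it is not the route the paper takes, and your sketch contains a real gap at the key step.

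The paper argues entirely on the side of the resolution $X$. One passes to the Kuranishi family $\mathscr{X}\to\Delta$ of $X$ and uses that the locus in $\operatorname{Def}(X)$ over which the divisor $E$ survives has codimension at least $g(\tilde{C})$ (this is the input from \cite[Proposition~1.2]{MG2} and \cite[Proposition~6.5]{YN}). Hence for $g\geq 1$ the surface $E$ does not deform to the general fibre $X_t$. The two genus ranges are then handled separately, but \emph{both} within this same framework of deforming $X$. For $g>1$, Wilson's analysis \cite{PMHW1} says that exactly $2g-2$ fibres of $E$ extend sideways, so the contraction $X_t\to Y_t$ is a small (type~I) contraction of $2g-2$ rational curves; since a normal $E$ rules out the case of a single ordinary double point, $Y_t$ is smoothable by \cite[Proposition~5.1]{MG1}. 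For $g=1$ the paper does \emph{not} switch to a cotangent-cohomology computation as you anticipate: instead one observes that neither $E$ nor any individual fibre deforms, so $X_t\to Y_t$ is an isomorphism for general $t$ and $Y_t$ is already smooth. In either case $Y=Y_0$ sits in a family with smooth general member, which is smoothability.

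As for your own route, the assertion that ``$g\geq 1$ forces $L=\mathcal{T}^1_Y$ to have degree large enough to admit a section vanishing nowhere'' is backwards. A line bundle on a curve has a nowhere-vanishing global section if and only if it is trivial, so large degree obstructs this rather than helping. The correct mechanism is not to smooth every point of $\tilde{C}$ simultaneously at first order; one only needs the image of $\mathbb{T}^1_Y\to H^0(\mathcal{T}^1_Y)$ to be nonzero, so that a general first-order deformation reduces the singular locus to finitely many isolated cDV points, which are then smoothable by the standard results. What the hypothesis $g\geq 1$ actually buys --- in the paper's argument, and in Gross's --- is a dimension comparison between the full deformation space and the equisingular sublocus, not any triviality statement about $\mathcal{T}^1_Y$.
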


\begin{proof}
By \cite[Proposition 1.2]{MG2} and \cite[Proposition 6.5]{YN}  the image of the map $Def(f)\to Def(X)$ has codimension $\geq g(\tilde{C})$. Image of this map is exactly the locus in $Def(X)$ where $E$ deforms in the family of $\mathscr{X}$. Thus, whenever $g(\tilde{C})\geq 1$, then for general $t\in\Delta$ the exceptional divisor $E$ does not deform to ${X}_t$. We consider separately the case when $g(\tilde{C})>1$ and $g(\tilde{C})=1$. 
When $\mathscr{X}\to\Delta$ is a Kuranishi family for $X=X_0$ such that $X$ contains a smooth surface $E$ ruled over a curve $C$ of genus $g>1$ then for every fibre $Z$ of $E$ and some disc around $0$ we have that every $X_t$ in the family contains a curve which is a deformation of $Z$. More specifically, following  discussion in section 4 of \cite{PMHW1}, fibers over $2g-2$ points of $C$ will extend sideways in the family of deformations $\mathscr{X}$. On the other hand the locus $\Gamma$ for which $E$ deforms in the family is of codimension $g$. From that we conclude that whenever $g>1$ the primitive contraction of type III $\pi:X_0\to Y_0$ deforms to family of type I contractions $\pi_t:X_t\to Y_t$ of $2g-2$ fibers. This means that for general $t$ the contraction ${X}_t\to{Y}_t$ is a small (type I) primitive contraction and unless $\text{Sing}({Y}_t)$ consists of exactly one ordinary double point then ${Y}_t$ is smoothable by \cite[Proposition 5.1]{MG1}. As $E$ is normal $Y_t$ cannot have only one ODP so general $Y_t$ is smoothable. 
In case when $g({C})=1$ we have that neither $E$ nor $L$ deforms in the family $\mathscr{X}$ and $X_t\to Y_t$ is an isomorphism for general $t$ and so $Y$ is smoothable. 
\end{proof}
\begin{prop}
Let $X$ be a smooth Calabi-Yau threefold containing a smooth surface $E$ ruled over a smooth curve $C$ of genus $p_a(C)>1$, let $\pi:X\to Y$ be a primitive type III contraction and let $\tilde{Y}$ be the smooth Calabi-Yau obtained by deforming $Y$. Then
\begin{enumerate}[resume]
\item $h^{1,1}(\tilde{Y})=h^{1,1}(X)-1$
\item $h^{1,2}(\tilde{Y})=h^{1,2}(X)+2p_a(C)-3.$
\end{enumerate}
\end{prop}
\begin{proof}
Let $\pi:\mathscr{X}\to \Delta$ be the Kuranishi family for $X=X_0$ where $\Delta$ is a polydisc in $H^1(T_X)$ which we identify with the space of deformations of $X$. By the above theorem the primitive type III contraction $\pi_0:X\to Y$ deforms to a primitive type I contraction $\pi_t:X_t\to Y_t$ of $2p_a(C)-2$ curves of $X_t$. As $E$ is a smooth surface ruled over a smooth curve $C$ we see from the discussion in the proof of Proposition 4.2 of \cite{PMHW1} that the fibers are being contracted to $A_1$ singularities and thus indeed this process yields a conifold transition $T(X_t,Y_t,\tilde{Y})$. Note that we have used the assumption that $E$ is a ruled surface over a smooth curve with $g(C)>1$. In general a type III contraction deforming to type I contractions may yield some worse singularities, for example if $E$ has double fibers.

%\begin{center}
%\begin{tikzcd}
%X \arrow[d]   & \in & \mathscr{X} \arrow[d] & \ni & X_t \arrow[d]   \\
%Y \arrow[rrd] & \in & \mathscr{Y}           & \ni & Y_t \arrow[lld] \\
%                &     & \Delta                &     &                
%\end{tikzcd}
%\end{center}

By \cite[p. 562]{PMHW1},  we can identify groups $H^2(X_t,\mathbb{Z})\cong H^2(X,\mathbb{Z})$ in a family $\pi:\mathscr{X}\to \Delta$ for some polydisc $\Delta\in H^1(T_X)$. We perfom the contractions $\pi_t:X_t\to Y_t$ over $\Delta$ and then the smoothing of the image of the type I contraction. From \cite[Proposition 3.1]{MG1} and \cite[(12.2.1.4.2)]{KJ2} we know the Picard number of $Y_t$ is constant for $t\in\Delta$ (shrinking $\Delta$ if necessary) and thus $h^{1,1}(\tilde{Y})=h^{1,1}(Y_t)=h^{1,1}(X_t)-1=h^{1,1}(X)-1$. 
The final equality follows easily from \cite{MR} Theorem 3.2. It gives that for a conifold transiton $T(X_t,Y,\tilde{Y})$ we have $h^{1,1}(\tilde{Y})=h^{1,1}(X_t)-k$,  $h^{1,2}(\tilde{Y})=h^{1,2}(X_t)+c$ where $k+c=|\text{Sing}(Y)|$. Since $|\text{Sing}(Y)|=2p_a(C)-2$ we obtain $c=2p_a(C)-3$ as $k=1$ concluding the proof.
\end{proof}
\section{Examples of Calabi-Yau threefolds from type III contractions}
\subsection{Construction}
We finish the paper by providing explicit construction of a family of Calabi-Yau threefolds arising as the examples of the procedure described above. 
Recall that we define $X=V(F)$ where $F=u^2F_3+uF_4+F_5$ is a homogenous polynomial of degree $5$ with $V(F_i)$'s being smooth degree $i$ surfaces in $\mathbb{P}^3$ containing a smooth curve $C$. As our construction requires this curve to be simultaneously contained in degree $3,4$ and $5$ surfaces we limit ourselves to the curves of degree $\leq 15$. Choosing a higher degree curve would mean that $V(F_3)\subset V(F_4)\cap V(F_5)$ and so in particular $F_5=F_2F_3$ which leads to $u=F_2=F_3=0$ being a singular curve of $V(F)=V(u^2F_3+uF_1F_3+F_2F_3)$. In choosing the particular curves we follow the classification of possible genus-degree combinations of curves in $\mathbb{P}^3$ in \cite{GP}. We do not claim this list is complete. We construct the curves as ones living on a cubic surface in $\mathbb{P}^3$ either by intersecting it with some other surface or by pulling back a curve on $\mathbb{P}^2$ under the blow up of 6 points in general position as for example in \cite[V 4.7]{RH}. In Table \ref{tabcur} we provide the list of curves that serve as a basis for our construction. The table gives the curve coordinates in Picard group of a cubic surface as well as its genus $g(C)$ and canonical class $K_C$. In general by $C_i$ we mean a curve of degree $i$. Curve $C_{a,b}$ is obtained as an intersection of degree $a,b$ surfaces in $\mathbb{P}^3$ and TC is a twisted cubic. Whenever we discuss more than one curve of a given degree we add a capital letter to the index to distinguish them. When we refer to the specific quintic $\bar{X}$ (or its resolution $X$) containing a given curve $C_i$ we denote it by $\bar{X}_i$ (resp $X_i$). Same applies for $Y_i$ and $\tilde{Y}_i$.

\begin{prop}
Threefold $\bar{X}$ contains a cone over a curve $C$ and the vertex of the cone $O$ is a triple point of $\bar{X}$
\end{prop}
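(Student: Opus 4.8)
The plan is to read off both assertions directly from the explicit shape of the defining equation $F=u^2F_3+uF_4+F_5$, working in homogeneous coordinates $[x:y:z:t:u]$ on $\mathbb{P}^4$ with $O=[0:0:0:0:1]$ and with the $F_i$ homogeneous of degree $i$ in $x,y,z,t$. The two claims—containment of the cone and the triple point at the vertex—are of a different nature, so I would treat them in turn.

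First I would make the inclusion $\bar{E}\subset\bar{X}$ precise. The cone $\bar{E}$ over $C$ with vertex $O$ is the union of the lines joining $O$ to the points of $C\subset\{u=0\}\cong\mathbb{P}^3$; concretely it is cut out by the homogeneous ideal of $C$ regarded in the variables $x,y,z,t$ alone, so its points are exactly the $[x:y:z:t:u]$ with $[x:y:z:t]\in C$, together with $O$ itself. By the construction the surfaces $V(F_3),V(F_4),V(F_5)$ all contain $C$, so at any point of $\bar{E}$ other than the vertex the three forms $F_3,F_4,F_5$ vanish simultaneously, whence $F=u^2F_3+uF_4+F_5=0$. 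At the vertex the three forms vanish by homogeneity, so $O\in\bar{X}$ as well. This shows $\bar{X}$ contains the cone $\bar{E}$.

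Next I would identify the singularity type at $O$ by passing to the affine chart $u=1$, where $O$ becomes the origin and $\bar{X}$ is defined by $f(x,y,z,t)=F_3+F_4+F_5$. The decomposition of $f$ into homogeneous pieces has lowest-order term $F_3$ of degree $3$, so the multiplicity of $\bar{X}$ at $O$ equals $3$ and the projectivised tangent cone is $V(F_3)\subset\mathbb{P}^3$. Since by hypothesis $V(F_3)$ is a smooth cubic surface, this tangent cone is smooth, which is precisely the condition recalled earlier for $O$ to be an ordinary triple point. Hence $O$ is a triple point of $\bar{X}$.

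The computation is elementary and I do not expect a genuine obstacle; the only point requiring care is the bookkeeping between the homogeneous picture on $\mathbb{P}^4$ and the local picture at $O$. One must check that $\bar{E}$ really is cut out by equations independent of $u$, and that dehomogenising at $u=1$ carries the grading of $F$ in powers of $u$ to the order filtration of $f$ at the origin, so that the tangent cone is indeed $V(F_3)$ and not some other form. Once this identification is secured, the assumed smoothness of $V(F_3)$ immediately upgrades the triple point to an ordinary one.
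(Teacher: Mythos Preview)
Your proof is correct and follows essentially the same approach as the paper's: both argue that $F_i(C)=0$ forces $F$ to vanish on the cone over $C$, and both identify the tangent cone at $O$ in the chart $u=1$ as $V(F_3)$, whose assumed smoothness makes $O$ an ordinary triple point. Your version is simply more explicit about the bookkeeping (the description of the cone and the passage from the $u$-grading to the order filtration), which the paper leaves implicit.
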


\begin{proof}
Since $F_i(C)=0$ it follows that $F(C)=0$. As $C$ is defined in $\mathbb{P}^3$ it is independent of $u$ and thus it defines a cone in $\bar{X}\subset \mathbb{P}^4 $. The vertex of the cone over $C$ is $O=[0:0:0:0:1]$. Locally around $O$ the equation of $\bar{X}$ is just $F_3$ and this defines a smooth cubic surface thus $O$ is an ordinary triple point of $\bar{X}$. 
\end{proof}

\begin{fact}
For nearly all quintic threefolds $\bar{X}$ we have found the intersection of $V(F_3), V(F_4)$  and $V(F_5)$ consists of the curve $C$ and a finite number of points (we call them excess points). Thus, these quintic threefolds contain not only a cone $\bar{E}$ but also a set of lines all passing through the vertex $O$. The exceptions are quintics containing curves $C_{2A}, C_{10B}, C_{11}, C_{12}$ and $C_{15}$ as for them there are no excess points and thus no excess lines. 
\end{fact}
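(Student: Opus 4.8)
The plan is to reduce everything to the geometry of the scheme $Z:=V(F_3)\cap V(F_4)\cap V(F_5)\subset\mathbb{P}^3$ and to control its decomposition into $C$ and a residual part. First I would record the dictionary between excess points and excess lines. By the preceding proposition $C\subset Z$ always. If $p=[a:b:c:d]\in Z\setminus C$, then, writing $F=u^2F_3+uF_4+F_5$, the line through $O=[0:0:0:0:1]$ and $p$ lies on $\bar X$, since $F$ restricts along it to $\mu^2\lambda^3F_3(p)+\mu\lambda^4F_4(p)+\lambda^5F_5(p)\equiv 0$ as each $F_i(p)=0$; this line passes through $O$ and meets the cone $\bar E$ only there. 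Conversely a line through $O$ on $\bar X$ forces its base point into $Z$. Hence the statement is equivalent to describing the isolated points of $Z$ lying off $C$.

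Next I would run a residuation argument on the smooth cubic surface $D=V(F_3)$, whose hyperplane class $H$ satisfies $H^2=3$. Since $C\subset V(F_4)$, restriction gives an effective divisor $V(F_4)\cap D=C+R'$ with $R'\sim 4H-C$, so $\deg R'=12-\deg C$, and $Z\setminus C=(R'\cap V(F_5))\setminus C$ because $C\subset V(F_5)$. Provided $R'\not\subset V(F_5)$, the intersection $R'\cap V(F_5)$ is a $0$-cycle of degree $5\deg R'=5(12-\deg C)$, of which the points already on $C$ are exactly $R'\cap C$, numbering $C\cdot R'=4\deg C-C^2$. Using adjunction on $D$, namely $C^2=2p_a(C)-2+\deg C$, the number of excess points becomes
\begin{equation*}
5(12-\deg C)-(4\deg C-C^2)=2p_a(C)-8\deg C+58 .
\end{equation*}
When this is positive one expects, and for nearly all curves of the table one verifies, finitely many nonzero excess points; together with $R'\not\subset V(F_5)$ (so that $C$ is the unique one-dimensional component of $Z$) this is the generic assertion.

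I would then isolate the exceptions as precisely the curves for which the residual contributes nothing isolated off $C$. Two are genuine complete intersections of two of the defining surfaces: $C_{12}$ has $\deg C=12=3\cdot 4$, so $C=V(F_3)\cap V(F_4)$, $R'=0$ and $Z=C\cap V(F_5)=C$; while $C_{15}$ has $\deg C=15=3\cdot 5$, so $C=V(F_3)\cap V(F_5)$ (the residuation must be carried out on this pair, the one with $V(F_4)$ being improper) and $Z=C\cap V(F_4)=C$. For $C_{10B}$ and $C_{11}$ one reads off the genera from the table and checks that $2p_a(C)-8\deg C+58=0$, which means every point in which $R'$ meets $V(F_5)$ already lies on $C$, so no isolated excess point survives. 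The remaining case $C_{2A}$ is the genuinely delicate one: its numerical type (a conic, $\deg C=2$) gives the strictly positive expected number $2p_a(C)+42$, so the absence of excess points is \emph{not} forced by the invariants and can only hold because the particular equations degenerate, the residual $R'$ being swept into $V(F_5)$ (contributing a component rather than isolated points); this has to be exhibited directly from the chosen $F_3,F_4,F_5$. I would finish by performing, for each entry of the table, the explicit elimination confirming $R'\not\subset V(F_5)$ in the generic cases and the vanishing of the isolated part in the five listed ones.

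The hard part is exactly this lack of uniformity. The residuation formula predicts the expected number of excess points purely from $\deg C$ and $p_a(C)$, and it cleanly accounts for four of the five exceptions; but it cannot by itself distinguish a special configuration, such as $C_{2A}$ where the expected count is positive yet the residual curve degenerates onto the quintic surface, from the generic one. Establishing the claim rigorously therefore requires controlling the position of the residual curve $R'$ relative to $V(F_5)$ for each curve in the table, which is ultimately an explicit computation with the chosen polynomials rather than a consequence of the numerical invariants alone.
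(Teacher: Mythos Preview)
The paper offers no proof of this Fact; it is recorded as a computational observation about the explicit quintics constructed in Section~3, presumably checked in Macaulay2 along with the other tabulated data. Your residuation argument on the cubic surface $D$ is therefore not an alternative route to the same proof but genuinely additional content: it explains \emph{why} the exceptions occur rather than merely listing them. The formula $2p_a(C)-8\deg C+58$ is correct and is more transparently obtained as the intersection number $(4H-C)\cdot(5H-C)$ on $D$ of the two residual curves $R_4=V(F_4)|_D-C$ and $R_5=V(F_5)|_D-C$; your intermediate step of intersecting $R'$ with all of $V(F_5)|_D$ and then subtracting $R'\cdot C$ reaches the same number but slightly obscures that the excess locus is exactly $R_4\cap R_5$ off $C$. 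This cleaner formulation also makes the two complete-intersection cases uniform: for $C_{3,4}$ one has $R_4\sim 0$, while for $C_{3,5}$ one has $4H-C\sim -H$, which is not effective, forcing $F_3\mid F_4$ and hence $Z=V(F_3)\cap V(F_5)=C$.

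Your treatment of $C_{2A}$ is where I would push back. There is no curve labelled $C_{2A}$ in Table~\ref{tabcur}; the Fact's notation is visibly out of step with the table (it writes $C_{12}$, $C_{15}$ for what the table calls $C_{3,4}$, $C_{3,5}$), and $C_{2A}$ is almost certainly a relic of an earlier version rather than a genuine fifth exception. Any smooth conic on $D$ has $p_a=0$, giving formula value $42$, and there is no mechanism by which a \emph{general} choice of $F_4,F_5$ through a conic forces the residual $R_4$ to lie in $V(F_5)$. Rather than inventing a special degeneration to account for it, the honest move is to flag the inconsistency: your formula predicts exactly four exceptions among the tabulated curves, namely $C_{10B}$, $C_{11}$, $C_{3,4}$, $C_{3,5}$, and a claimed fifth exception with strictly positive expected count would require a non-generic choice of equations that the paper nowhere specifies. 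The remaining genericity checks you list (that $R_4$ and $R_5$ share no component and that $R_4\cap R_5$ avoids $C$) are legitimate loose ends, but they are open conditions on $(F_4,F_5)$ and hold for the general member of each linear system, which is all the Fact claims.
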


Before me move further with the discussion we want to discuss one more interesting property of quintic threefolds we work with.
\begin{fact}
We can look at the geometry of quintic $\bar{X}$ as that of a double octic that is a double covering of $\mathbb{P}^3$ branched over an octic surface (e.g. as in \cite{CSZ}). Recall that $\bar{X}$ is described as a zero locus of  $F(x,y,z,t,u)=u^2F_3(x,y,z,t)+uF_4(x,y,z,t)+F_5(x,y,z,t)$. We can project $X$ onto $\mathbb{P}^3$ from the point $[0:0:0:0:1]$. Then the branching locus is defined as the vanishing set of the discriminant, namely a surface of degree $8$, necessarily of the form $S=V(F_4^2-4F_3F_5)\subset\mathbb{P}^3$.
\end{fact}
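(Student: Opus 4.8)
The plan is to exhibit the double octic structure explicitly by linear projection from the triple point, followed by a completion of the square in the variable $u$. First I would consider the linear projection $p\colon\mathbb{P}^4\dashrightarrow\mathbb{P}^3$ from the centre $O=[0:0:0:0:1]$, given in coordinates by $[x:y:z:t:u]\mapsto[x:y:z:t]$, and restrict it to $\bar{X}$. The key observation is that for fixed $[x:y:z:t]\in\mathbb{P}^3$ the restriction of $F=u^2F_3+uF_4+F_5$ to the fibre is a quadratic polynomial in the single variable $u$, whose coefficients $F_3,F_4,F_5$ are constant along the fibre. Equivalently, a line through $O$ meets $\bar{X}=V(F)$ in two points counted with multiplicity, so $p|_{\bar{X}}$ is generically two-to-one onto $\mathbb{P}^3$.

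To make the covering explicit I would complete the square. Multiplying $F$ by $4F_3$ and setting $w=2F_3u+F_4$ yields the polynomial identity $4F_3F=w^2-(F_4^2-4F_3F_5)$, so that on $\bar{X}$, where $F=0$, one has $w^2=F_4^2-4F_3F_5$. Since $w$ is homogeneous of degree $4$ and the right-hand side is homogeneous of degree $8$, this realizes $\bar{X}$ birationally as a double cover of $\mathbb{P}^3$ inside the total space of $\mathscr{O}_{\mathbb{P}^3}(4)$, with the two sheets exchanged by $w\mapsto-w$; this is precisely a double octic in the sense of \cite{CSZ}. Over a point where $F_3\neq 0$ the inverse recovers $u=(w-F_4)/(2F_3)$ uniquely, so the identification is biregular away from $V(F_3)$.

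The branch locus is the locus where the two sheets coincide, that is where $w=0$, which is the vanishing of the discriminant $\Delta=F_4^2-4F_3F_5$ of the quadratic in $u$. A direct degree count, $\deg(F_4^2)=8=\deg(F_3F_5)$, shows that $S=V(\Delta)\subset\mathbb{P}^3$ is a surface of degree $8$, exactly as asserted.

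The one point requiring care, and the main obstacle, is the locus $V(F_3)$ where the leading coefficient of the quadratic degenerates: there $F=uF_4+F_5$ is only linear in $u$, so one root escapes to $u=\infty$, i.e.\ towards the centre of projection $O$, and the projection fails to be a morphism along $V(F_3)$ and at $O$ itself. I would account for this by passing to the blow-up of $O$ analysed earlier, where the triple point is replaced by the exceptional cubic surface, isomorphic to $V(F_3)$; on this model the degeneration of the two sheets over $V(F_3)$ is precisely what produces the vertex of the cone, and the double octic description becomes a genuine morphism away from the exceptional divisor. Since the statement of the Fact is a birational description of the geometry of $\bar{X}$, this level of detail suffices for our purposes.
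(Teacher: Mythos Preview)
Your argument is correct and follows exactly the approach indicated in the paper: the Fact is stated without a separate proof, the justification being contained in the statement itself (project from $O$, obtain a quadratic in $u$ on each fibre, and read off the branch locus as the discriminant $F_4^2-4F_3F_5$). Your completion-of-the-square computation and your discussion of the degeneration along $V(F_3)$ simply make explicit what the paper leaves implicit, so there is nothing to correct or contrast.
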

There  are many interesting properties linking the geometry of $X$ with that of $S$. We show the basic one.
\begin{prop}
Singular points of $S$ contain intersection of $V(F_3)$, $V(F_4)$, $V(F_5)$. In particular, $C\subset Sing(S)$.
\end{prop}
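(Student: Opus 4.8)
The plan is to show directly that every point $P$ of $V(F_3)\cap V(F_4)\cap V(F_5)$ is a singular point of the octic surface $S=V(G)$, where $G=F_4^2-4F_3F_5$ is the discriminant identified in the preceding Fact. Recall that $P$ is a singular point of $S$ precisely when $G(P)=0$ together with the vanishing of the whole gradient, $\frac{\partial G}{\partial\alpha}(P)=0$ for every $\alpha\in\{x,y,z,t\}$. So the proof splits into two verifications, both of which are immediate once the discriminant is written out.

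First I would check that $P$ lies on $S$. Since $F_3(P)=F_4(P)=F_5(P)=0$ by the hypothesis $P\in V(F_3)\cap V(F_4)\cap V(F_5)$, we get $G(P)=F_4(P)^2-4F_3(P)F_5(P)=0$ at once, so $P\in S$.

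Next I would compute the partial derivatives by the product rule, obtaining
$$\frac{\partial G}{\partial\alpha}=2F_4\frac{\partial F_4}{\partial\alpha}-4\frac{\partial F_3}{\partial\alpha}F_5-4F_3\frac{\partial F_5}{\partial\alpha}.$$
The key observation is that each of the three summands on the right carries an undifferentiated factor drawn from $\{F_4,F_5,F_3\}$. Evaluating at $P$, each of these factors vanishes, so every term dies and $\frac{\partial G}{\partial\alpha}(P)=0$ for all $\alpha$. Combined with the first step, this shows $P\in\operatorname{Sing}(S)$, giving the claimed containment $V(F_3)\cap V(F_4)\cap V(F_5)\subseteq\operatorname{Sing}(S)$.

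Finally, the \emph{in particular} clause follows immediately from the earlier observation that $C\subset V(F_3)\cap V(F_4)\cap V(F_5)$ for quintics of the form under consideration, whence $C\subset\operatorname{Sing}(S)$. I do not anticipate a genuine obstacle here: the argument is a single gradient computation, and the only points worth stating carefully are that \emph{singular} demands both the vanishing of $G$ and of its full gradient, and that the assertion is merely a containment (not an equality), so no converse inclusion needs to be established.
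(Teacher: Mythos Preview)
Your proof is correct and follows essentially the same approach as the paper: both compute $\frac{\partial G}{\partial\alpha}$ for $G=F_4^2-4F_3F_5$ via the product rule and observe that every summand carries an undifferentiated factor $F_3$, $F_4$, or $F_5$, hence vanishes on $V(F_3)\cap V(F_4)\cap V(F_5)$. Your write-up is in fact slightly more careful than the paper's, since you explicitly verify $G(P)=0$ before turning to the gradient.
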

\begin{proof}
Let $G:=F_4^2-4F_3F_5$.
Straightforward calculation of derivatives gives us $\frac{\partial G}{\partial \alpha}=2F_4\frac{\partial F_4}{\partial \alpha}-4(F_3\frac{\partial F_5}{\partial \alpha}+F_5\frac{\partial F_3}{\partial \alpha}$ for $\alpha\in\{x,y,z,t\}$ and so whenever $F_3=F_4=F_5=0$ we have $\frac{\partial G}{\partial \alpha}=0$.
This shows that singular locus of $S$ contains curve $C$ as well as the excess points they are all contained in the intersection of $X_3, X_4, X_5$. 
\end{proof}

We will not discuss the properties of the double cover any further as this is not the aim of our paper. It is worth noting though that some restrictions on the existence of singular octic surfaces translate into restrictions of existence of quintic threefolds with triple point(s). For example one can show that for $m>1$ quintic threefold with $m$ ordinary triple points is a double cover of $\mathbb{P}^3$ branched over an octic surface with $m-1$ quadruple points. As there are restrictions to the number of isolated quadruple points on an octic \cite[Proposition 5.1]{CS},  this provides interesting results. In general, quintic threefold containing a triple point must contain $60$ lines, as intersection of $V(F_3), V(F_4)$ and $V(F_5)$ consists of $60$ points. Yet, for $m\geq10$ a quintic threefold must necessarily contain cones whose vertices are triple points in question as singularites on octic can no longer be isolated forcing $V(F_3), V(F_4)$ and $V(F_5)$ to intersect in a (non necessarily irreducible) curve. This may prove fruitful in further research.

Now we want to describe the K\"{a}hler cone of the constructed threefolds. We use the formula for $h^{1,1}(X)$ from Section 3 to calculate the defect $\delta$ in Macaulay2. As expected we obtain $\delta=1$ for all quintics ${X}_i$ and thus $h^{1,1}(X_i)=3$ which is also the rank of their Picard groups as discussed previously. The following Theorem is a straightforward application of Theorem \ref{KC}.
\begin{thm}
For all quintic threefolds we have obtained, the closure $\bar{K}$ of the K\"{a}hler cone $K$ is a convex hull of three rays. Furthermore, two of the rays are generated by the divisors $H$ and $H-D$ and don't lie on the cubic cone $W$. The third ray is generated by a divisor $L$ such that $L^3\neq 0$ for all quintics but for $X_{2,3}, X_{8B}, X_{11}, X_{3,5}$ for which $L^3=0$.
\end{thm}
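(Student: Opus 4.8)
The plan is to deduce everything from Theorem~\ref{KC}, which already fixes the combinatorial shape of $\bar{K}$: it is the convex hull of the three rays spanned by $H$, $H-D$ and the divisor $L$ sitting on the wall where $Q_{|D}$ stops being ample. The two assertions about $H$ and $H-D$ are then immediate, since $H^3=5\neq 0$ and $(H-D)^3=8\neq 0$ were already computed in the proof of Theorem~\ref{KC}, so neither divisor lies on the cubic cone $W$. The whole content of the statement is therefore the identification of $L$ for each of the constructed quintics together with the computation of $L^3$.

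First I would make explicit the recipe for $L$ extracted from the proof of Theorem~\ref{KC}. The face of $\bar{K}$ carrying the type~III contraction consists of the divisors annihilating the fibre $l$, and these are exactly $Q(\beta)=(2-\beta)H+\beta D+E$. Restricting to the cubic surface gives $Q(\beta)_{|D}=C-\beta H_D$, and writing $C=ah+\sum_{i=1}^{6}b_ie_i$ as read off from Table~\ref{tabcur} this becomes $(a-3\beta)h+\sum_i(b_i-\beta)e_i$. Applying to this one-parameter family the ampleness criterion for cubic surfaces recalled in the proof of Theorem~\ref{KC} (intersections with the $27$ lines on $D$, via \cite{RH}), I would locate the smallest $\beta=\beta_0$ at which one of the defining inequalities degenerates to an equality; the corresponding $Q(\beta_0)$ is then $L$.

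Next I would evaluate $L^3$. Using the intersection numbers computed in the lemmas above, namely $H^3=5$, $H^2.D=0$, $H^2.E=\deg(C)$, $H.D^2=0$, $H.D.E=0$, $D^2.E=-\deg(C)$, $H.E^2=2g-2-\deg(C)$, $D.E^2=2g-2+\deg(C)$, $D^3=3$ and $E^3=8(1-g)$, the cube of $L=(2-\beta_0)H+\beta_0 D+E$ expands to
\[
L^3=5(2-\beta_0)^3+3\beta_0^3+8(1-g)+3\deg(C)\bigl((2-\beta_0)^2-\beta_0^2\bigr)+3(2-\beta_0)\bigl(2g-2-\deg(C)\bigr)+3\beta_0\bigl(2g-2+\deg(C)\bigr).
\]
Feeding in, curve by curve, the genus $g$, the degree $\deg(C)$ and the critical $\beta_0$ coming from Table~\ref{tabcur} (conveniently carried out in Macaulay2, as was done for the defect $\delta$), I would confirm that $L^3\neq 0$ in every case except $X_{2,3}$, $X_{8B}$, $X_{11}$ and $X_{3,5}$, for which the expression vanishes; equivalently, for these four the divisor $L$ lands on the cubic cone $W$.

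The main obstacle is not any individual identity but the bookkeeping in the second step: for different curves a different one of the $27$ lines on $D$ (equivalently a different ampleness inequality) is the first to be violated as $\beta$ grows, so $\beta_0$ is governed by a different formula in $a$ and the $b_i$ in each case, and one must take care to select the wall actually met first. Once $\beta_0$ is correctly pinned down for each quintic, substitution into the displayed cubic is routine, and the exceptional list is read off directly.
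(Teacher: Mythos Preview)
Your proposal is correct and follows exactly the route the paper takes: the paper simply states that the result is a ``straightforward application of Theorem~\ref{KC}'' and relegates the case-by-case determination of $\beta$, $L$ and $L^3$ to Table~\ref{tabdivl}, whereas you spell out the mechanism (finding the critical $\beta_0$ via the ampleness criterion on $D$, then evaluating $L^3$ from the known triple intersections) and even supply the closed expression for $L^3$ that the paper leaves implicit. One cosmetic point: the paper uses the convention $C=ah-\sum b_ie_i$ and $H_D=3h-\sum e_i$, so your restriction should read $Q(\beta)_{|D}=(a-3\beta)h-\sum_i(b_i-\beta)e_i$; this does not affect the argument.
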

In Table \ref{tabdivl} we present values $\beta$ for which $Q_{|D}$ stops being ample, the divisor $L=(2-\beta)H+\beta D+E$ and value $L^3$ for each of the quintics $X$. In column $\perp L$ we list curves $c$ on $D$ for which $c.L=0$. By $C$ we mean the section of $E$, namely $E.D$, $e_i$ are six exceptional divisors of the blow-up of $\mathbb{P}^2$, $g_i=2h+\Sigma_{j\neq i} e_j$, $f_{i,j}=h+e_i+e_j$ are the remaining lines. We write $D$ when $L$ has zero intersection with the whole cubic surface. 

We are now ready to prove the following statement:
\begin{thm}
Quintic threefolds $X_i$ constructed in this paper admit 
\begin{enumerate}
\item a type I, a type II, and a type III primitive contractions, for $i\in\{3, \{2,3\}, \{3,3\}, 9B, \{3,4\}, \{3,5\}\}$
\item two type I primitive contractions and a type III primitive contraction in other cases.
\end{enumerate}
\end{thm}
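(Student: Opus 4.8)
The plan is to read the three contractions straight off the three codimension-one faces of $\bar{K}$ produced in Theorem~\ref{KC}. Since $\operatorname{Pic}_{\mathbb{R}}(X)\cong\mathbb{R}^3$ and $\bar{K}$ is the convex hull of the rays $H$, $H-D$ and $L=(2-\beta)H+\beta D+E$, it has exactly three facets, each spanned by a pair of these rays. Because $H^3=5>0$ and $(H-D)^3=8>0$, every facet contains a divisor off the cubic cone $W$, so by Fact~1 of \cite{PMHW1} each facet determines a primitive birational contraction; the curves it contracts are precisely the effective classes $c$ with $T_1.c=T_2.c=0$, where $T_1,T_2$ are the two nef spanning divisors (a positive combination of nef classes vanishes on an effective $c$ iff each summand does). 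First I would tabulate, using the intersection numbers of Section~1, the pairings of $H$, $H-D$ and $L$ with the fibre $l$ of $E$, the section $C=D.E$, the node-curve $r$, the excess line $t$, and the curves lying on the cubic surface $D$ (recalling $H|_D=0$, $D|_D=-H_D$, $E|_D=C$, so $L|_D=C-\beta H_D$).

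With this table the three types separate cleanly. The facet $\langle H-D,L\rangle$ is the only one annihilating a fibre, since $(H-D).l=0$ and $L.l=0$ (both being of the form noted in the proof of Theorem~\ref{KC}) while $H.l\neq 0$; contracting the entire ruling collapses $E$ onto its base curve $C$, a divisor-to-curve map, so this facet always gives the type~III contraction. The facet $\langle H,H-D\rangle$ contracts exactly the node-curves $r$: one has $H.r=(H-D).r=0$, whereas any curve in $D$ satisfies $(H-D).c=H_D.c>0$, and $H.l,H.t\neq 0$, so only the finitely many $r$'s are collapsed, a type~I contraction. The remaining facet $\langle H,L\rangle$ contracts the curves of $D$ killed by $L$, since $H$ vanishes on all of $D$ while $L.r=1$, $L.t=2$ and $H.l\neq 0$ keep $r,t,l$ off the contracted locus; its type is thus dictated entirely by $L|_D=C-\beta H_D$. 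If $L|_D\equiv 0$ (equivalently $C$ proportional to $H_D$, the entries marked $D$ in the $\perp L$ column of Table~\ref{tabdivl}) the whole cubic surface is collapsed to a point, a type~II contraction; otherwise $L|_D$ is nef but not ample and vanishes only on the finitely many lines listed there, which are sent to points, a type~I contraction. Comparing with that column, $L\perp D$ holds exactly for $i\in\{3,\{2,3\},\{3,3\},9B,\{3,4\},\{3,5\}\}$, yielding case~(1), and in all remaining cases only finitely many lines are contracted, yielding case~(2).

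The delicate point, which I would treat most carefully, is the analysis of $L|_D$ on the cubic surface in the type~I subcase: I must verify that $L|_D$ always lies on a face of the nef cone of $D$ that blows down a \emph{finite} set of $(-1)$-curves, and never on a conic-bundle face, i.e. a class with $(L|_D)^2=0$ but $L|_D\not\equiv 0$. Were it to land on such a face, a one-parameter family of curves in $D$ would be contracted and $D$ would collapse to a curve, producing a second type~III contraction forbidden by the statement. This is exactly where the explicit value of $\beta$ and the ampleness criterion for cubic surfaces (Corollary~4.13 of \cite{RH}) enter: for each class $C=ah+\sum b_i e_i$ of Table~\ref{tabcur} one computes the least $\beta$ for which $C-\beta H_D$ ceases to be ample and checks that the resulting boundary class is either identically zero (type~II) or orthogonal to only the finitely many lines recorded in Table~\ref{tabdivl} (type~I), never defining a fibration of $D$. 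The remaining bookkeeping is the case-by-case verification summarised in that table.
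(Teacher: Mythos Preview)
Your approach is essentially the same as the paper's: both read the three contractions off the three facets of $\bar K$ via Theorem~\ref{KC} and Wilson's Fact~1, identifying $\langle H,H-D\rangle$ as type~I (contracting $r$), $\langle H-D,L\rangle$ as type~III (contracting the ruling $l$), and $\langle H,L\rangle$ as type~II or type~I according to whether $L|_D\equiv 0$ or not, the dichotomy being read from the $\perp L$ column of Table~\ref{tabdivl}. Your write-up is considerably more detailed than the paper's two-sentence argument, and the ``delicate point'' you single out---ruling out that $L|_D$ lands on a conic-bundle face of the nef cone of $D$, which would produce a second type~III contraction---is a genuine issue the paper handles only implicitly through the case-by-case tabulation; making it explicit is a good addition.
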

\begin{proof}
This theorem is a consequence of the previous one and of the Fact 1 in \cite{PMHW1}. Codimension one face spanned by $H-D$ and $H$ corresponds to a type I contraction as it contracts $r$, face spanned by $H-D$ and $L$ corresponds to a type III contraction as it contracts all fibers $l$ of $E$ and thus contracts $E$ to a curve. The face spanned by $L$ and $H$ corresponds to either a type II contraction when $L.D=0$ or to a type I contraction in the remaining cases.  
\end{proof}
Note the fact that $L^3=0$ for some of the quintics does not affect the contraction corresponding to the faces of the cone limited by $L$ as for all divisors $Q$ on these faces $Q^3\neq 0$. Also take notice that the contraction of $X$ given by the system $H$ is exactly the blow-up of a triple point and the curves coming from the small resolution, while system $H-D$, that is system of hyperplane sections passing through $D$, which translates to hyperplane sections passing through the point $O$ on $\bar{X}$ is the system giving a double cover of $\mathbb{P}^3$ discussed earlier. 
In the following section we take a closer look at the type III contraction.

\subsection{Contraction and smoothing}
We restrict our focus to the type III contraction admitted by quintics $X_i$. 
\subsection{Linear system}
We keep the notation as above. The goal of this section is to prove three theorems regarding the existence of type III contractions. 
\begin{thm}
For discussed quintics $X_i$, where $i\notin\{1, 2,\{TC\},\{4A\},\{4B\},\{7A\}\}$, the morphism $\phi_{(|m(2H+E)|)}:X\to Y$ for some $m>>0$ is a primitive contraction of type III which contracts ruled surface $E$ to a curve $\tilde{C}$.
\end{thm}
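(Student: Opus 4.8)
The plan is to exhibit $2H+E$ as a nef divisor sitting on the codimension one face of $\bar K$ that, by Theorem \ref{KC} and the subsequent analysis, corresponds to the type III contraction, and then to invoke Fact 1 of \cite{PMHW1}. First I would locate $2H+E$ inside the K\"ahler cone of Theorem \ref{KC}. In the parametrization $(2\gamma-\beta)H+\beta D+\gamma E$ of the divisors having zero intersection with the fibre $l$ that is used there, the class $2H+E$ is the member with $\gamma=1$, $\beta=0$, and comparing coefficients yields
\[
2H+E=\beta_L\,(H-D)+L,\qquad L=(2-\beta_L)H+\beta_L D+E,
\]
where $\beta_L$ is the value at which $Q_{|D}$ stops being ample, recorded in Table \ref{tabdivl}. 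Thus, provided $\beta_L>0$, the class $2H+E$ is a strictly positive combination of the two nef generators $H-D$ and $L$ of the type III face, hence nef and lying in the relative interior of that face.

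Next I would verify that $2H+E$ avoids the cubic cone $W$, as required by Fact 1. From the preceding lemmas ($H^3=5$, $H^2.E=2\deg C$, $H.E^2=2g-2-3\deg C$, $E^3=8(1-g)$) one computes
\[
(2H+E)^3=36+6\deg C+4g>0,
\]
so $2H+E$ is nef and big, and in particular $2H+E\notin W$. Since $K_X=0$, the divisor $2H+E-K_X=2H+E$ is nef and big, so the base point free theorem applies and $2H+E$ is semiample; hence for $m\gg 0$ the system $|m(2H+E)|$ is base point free and defines a morphism $\phi_{|m(2H+E)|}\colon X\to Y$, which by Fact 1 of \cite{PMHW1} is a primitive contraction, birational onto $Y$ because $2H+E$ is big.

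It then remains to identify the contracted curves, namely those $c$ with $(2H+E).c=0$. Since $H.D=0$ and $E.D=C$, we have $(2H+E)_{|D}=C$ in $\operatorname{Pic}(D)$, so for $c\subset D$ one gets $(2H+E).c=(C.c)_D$; for the non-excluded curves $C$ is ample on $D$ (this is precisely the condition $\beta_L>0$ above), whence $(C.c)_D>0$. Together with $(2H+E).r=E.r=1$, $(2H+E).t=2(2H+E).r=2$ (using $t\sim l+2r$), $(2H+E).C=3\deg C+2g-2>0$, and $(2H+E).l=2H.l+E.l=2-2=0$ (a general hyperplane meets a ruling once, while $l.E=-2$ by the lemma), this shows the only contracted curves are the fibres $l$. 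Hence $\phi$ contracts the ruled surface $E$ onto the image $\tilde C$ of $C$ and is an isomorphism elsewhere, so it is a contraction of type III.

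The one genuinely delicate step, and the main obstacle, is the positivity analysis on the cubic surface $D$: the whole argument rests on $(2H+E)_{|D}=C$ being ample on $D$, equivalently on $C$ being neither a $(-1)$-curve nor a $0$-curve in $\operatorname{Pic}(D)$. This is exactly what fails for the excluded curves $C_1,C_2,\mathrm{TC},C_{4A},C_{4B},C_{7A}$, whose low degree forces $C$ onto the boundary of, or outside, the nef cone of $D$ (for instance a line on $D$ is a $(-1)$-curve); for these $2H+E$ is either not nef or lies on a proper subface, and the associated contraction is not of type III. Checking ampleness of $C$ on $D$ in the remaining cases is carried out exactly as in the proof of Theorem \ref{KC}, via the criterion of \cite[Cor. 4.13]{RH}, and is what Table \ref{tabdivl} records.
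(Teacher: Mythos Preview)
Your proof follows essentially the same strategy as the paper's: place $2H+E$ on the codimension-one face of $\bar K$ cut out by $l^\perp$, observe that $(2H+E)_{|D}=C$ is ample on $D$ precisely for the non-excluded indices, and invoke Fact~1 of \cite{PMHW1} to obtain the primitive contraction. You are more explicit than the paper in two respects: you appeal to the base point free theorem to justify semiampleness (the paper simply falls back on Wilson's Fact~1 after remarking that it cannot verify surjectivity of the restriction map to $|C|$ on $D$), and you compute $(2H+E)^3$ directly to check $2H+E\notin W$. Your decomposition $2H+E=\beta_L(H-D)+L$ is a clean way to read off from Table~\ref{tabdivl} that $2H+E$ lies in the relative interior of the face exactly when $\beta_L>0$, which is the same ampleness condition the paper uses.

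A few arithmetic slips should be corrected, though none of them damages the argument. From the intersection lemmas one has $H^2.E=\deg C$ (not $2\deg C$), $H.E^2=2g-2-\deg C$ (not $2g-2-3\deg C$), and $(2H+E).C=2H.C+E.C=0+(K_C+\deg C)=2g-2+\deg C$ (not $3\deg C+2g-2$). Your two errors in the cube expansion happen to cancel, so the value $(2H+E)^3=36+6\deg C+4g$ is correct and agrees with the paper; the error in $(2H+E).C$ does not affect positivity for the non-excluded cases. Finally, your last paragraph slightly overstates the situation for the excluded indices with $\beta_L=0$: there $2H+E$ coincides with the extremal ray $L$ itself, so it is still nef, but it sits at a vertex of $\bar K$ rather than in the interior of a face, and the associated morphism contracts the extra curves listed in Table~\ref{tabdivl} in addition to the rulings.
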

By $\phi_{(|m(2H+E)|)}$ we mean a morphism given by nef linear system $|m(2H+E)|$ on $X_i$. 
We've excluded those quintics for which $2H+E$ has zero, or negative intersection with some curve on $D$ leaving only those, for which $(2H+E)_{|D}$ (and thus its positive multiple) is ample.

\begin{proof}
First of all we remark that $2H+E$ is indeed contained in the face of the K\"{a}hler cone spanned by $H-D$ and $L$ as it is of the form $(2\gamma-\beta)H+\beta D+\gamma E$ for $\beta=0$ and so $(2H+E).l=0$ for any quintic $X_i$. 
Since $|H|$ is the pullback of a hyperplane section of $\bar{X}$ it separates points on $X$ outside of $D$ and so does $|2H+E|$. Moreover restriction of $T\in|2H+E|$ to $D$ belongs to the linear system $|\tilde{C_i}|$ of curves linearly equivalent to some curve $\tilde{C}_i$ on $D$ (different for each $X_i$). For all quintics we discuss here the system $|\tilde{C_i}|$ is (very) ample yet we are not sure $|2H+E|$ maps surjectively onto it and thus whether or not its restriction to $D$ separates points. Yet, following the Fact 1 of \cite{PMHW1} we know codimension one faces of the K\"{a}hler cone correspond to the primitive contractions thus some multiple of $|2H+E|$ gives the desired morphism finishing the proof.
\end{proof}

Note $mT\in |m(2H+E)|$ corresponds to the hyperplane section of $Y$ and thus we have that $mT.E$ is actually the degree of the curve $\tilde{C}$. By similar argument we obtain the degree of $Y$ namely  
$(mT)^3=m^3(2H+E)^3=m^3(36+6\deg(C)+4g(C)).$
We have strong reasons to believe that $m=1$ should suffice to give the desired morphism although we haven't been able to show it yet. 

We need to deal with the remaining threefolds and their respective type III contractions.
\begin{thm}
For quintics $X_i$ where $i\in\{2,\{TC\},\{4A\},\{4B\},\{7A\}\}$ the morphism $\phi_{(m|3H-D+E|)}:X\mapsto Y$, for some $m>>0$ is a primitive contraction of type III which contracts ruled surface $E$ to a curve $\tilde{C}$.
\end{thm}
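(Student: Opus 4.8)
The plan is to run the proof of the preceding theorem essentially verbatim, the only genuinely new ingredient being the restriction of the test divisor to the cubic surface $D$. The divisor $3H-D+E$ is engineered so that it lies on the \emph{same} codimension one face of $\bar K$ as $2H+E$ did, while restricting to a different — and now ample — class on $D$. Writing a general class as $\alpha H+\beta D+\gamma E$, recall from the proof of Theorem \ref{KC} that the divisors on the type III face (those with $Q.l=0$) are precisely those of the form $(2\gamma-\beta)H+\beta D+\gamma E$. Since $3H-D+E$ has $\gamma=1$, $\beta=-1$ and $2\gamma-\beta=3$, it sits on this face; moreover $(3H-D+E).r=0-0+1=1>0$, so it does not lie on the adjacent face that contracts $r$. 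The first step is thus to record these two intersection computations.

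The second, short step is to identify the restriction to $D$. Using $H_{|D}=0$, $D_{|D}=-H_D$ and $E_{|D}=C$ from the lemmas of Section~1 one obtains
\[
(3H-D+E)_{|D}=H_D+C,
\]
whereas $(2H+E)_{|D}=C$. This is exactly why the curves $C_2,C_{TC},C_{4A},C_{4B},C_{7A}$ were set aside in the previous statement: for them $C$ itself fails the ampleness criterion on $D$, so $2H+E$ restricts to a non-ample class, whereas twisting by the hyperplane class $H_D$ is expected to cure this.

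The main obstacle — and the only real work — is to verify that $H_D+C$ is ample on $D$ for each of these five curves. Here I would apply the ampleness criterion for classes on a cubic surface recalled in the proof of Theorem \ref{KC} (Corollary 4.13 of \cite{RH}): expressing $H_D+C$ in the generators $h,e_1,\dots,e_6$ of $\mathrm{Pic}(D)$ from the curve data tabulated in Table \ref{tabcur}, one checks the three families of inequalities given there. Passing from $C$ to $H_D+C$ adds the ample class $H_D=3h+\sum e_i$, pushing each coefficient in the favourable direction, so I expect all the inequalities to hold for these five curves; the verification is a finite, purely numerical case check entirely parallel to the determination of the divisor $L$ in Theorem \ref{KC}, and this is where I anticipate spending the effort.

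Finally I would conclude as in the previous theorem. Away from $D$ the map $\pi$ is an isomorphism and $3H$ is a positive multiple of the pulled-back hyperplane class, so $|3H-D+E|$ separates points of $X\setminus D$, while on $D$ it restricts to the ample $H_D+C$. Being interior to the face spanned by $H-D$ and $L$, the class has nonzero cube (as observed earlier, interior classes of these faces never meet the cubic cone $W$), hence is nef and big. By Fact~1 of \cite{PMHW1} the codimension one face it spans yields a primitive contraction, and since the only effective curves orthogonal to it are the fibres $l$ of $E$, this contraction collapses every fibre of $E$ and thus contracts the divisor $E$ onto a curve $\tilde C$. Therefore some multiple $m(3H-D+E)$ with $m\gg0$ defines a primitive type III contraction $\phi:X\to Y$ contracting $E$ to $\tilde C$, as claimed.
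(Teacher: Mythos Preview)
Your proof is correct and follows essentially the same route as the paper's own argument: place $3H-D+E$ on the face of $\bar K$ with $Q.l=0$, observe that away from $D$ positivity comes from the $H$-part, compute the restriction to $D$ as $H_D+C$ (the paper phrases this as $(-D+E)_{|D}$, which is the same class since $D_{|D}=-H_D$ and $E_{|D}=C$), check it is ample for these five curves, and invoke Fact~1 of \cite{PMHW1}. Your write-up is simply more explicit than the paper's terse version, in particular in spelling out the numerical ampleness check and the decomposition $3H-D+E=(H-D)+(2H+E)$ placing the class in the interior of the face.
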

\begin{proof}
We can follow the proof of the previous theorem. As in the preceeding case we note that $T:=3H-D+E$ lies on the face spanned by $H-D$ and $L=2H+E$. Intersection of $T$ with any curve not contained in $D$ or $E$ is positive as $3H$ intersects any such curve. The divisor is chosen so it has zero intersection with fiber $l$ of $E$ and in all cases of this theorem $(-D+E)_{|D}$ is an ample curve on $D$ thus intersection with any curve contained in $D$ is positive. Again by Fact 1 of \cite{PMHW1} we obtain the desired morphism.

\end{proof}
\begin{thm}
For quintic $X_1$  the morphism $\phi_{(m|4H-2D+E|)}:X\mapsto Y$ for some $m>>0$ is a primitive contraction of type III which contracts ruled surface $E$ to a curve $\tilde{C}$.
\end{thm}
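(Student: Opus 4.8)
The plan is to follow the template of the two preceding theorems; the only genuinely new ingredient is a positivity computation on the cubic surface $D$ adapted to the curve $C_1$. First I would record that $T:=4H-2D+E$ lies on the codimension-one face of $\bar{K}$ spanned by $H-D$ and $L=2H+E$. Indeed, in the proof of Theorem~\ref{KC} the divisors with $Q.l=0$ were seen to be exactly those of the form $(2\gamma-\beta)H+\beta D+\gamma E$; taking $\gamma=1$ and $\beta=-2$ reproduces $T$, so $T.l=0$ and $T$ contracts every fibre of $E$.

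I would then verify that $T$ is strictly positive on every other curve, so that the associated morphism collapses $E$ and nothing more. For a curve $c\not\subset D\cup E$ with $H.c>0$ the term $4H$ already gives $T.c>0$; the only class outside $D\cup E$ that $H$ contracts is the small-resolution class $r$, and there $T.r=4H.r-2D.r+E.r=0-0+1>0$ by the intersection lemmas. For curves on $D$ I use $H_{|D}=0$, $D_{|D}=-H_D$ and $E_{|D}=C$ to get $T_{|D}=2H_D+C$, and the whole matter reduces to showing that this class is ample on $D$ for $C=C_1$.

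This ampleness check is the crux, and it is exactly what forces the coefficients $4$ and $-2$ rather than the $3$ and $-1$ used in the previous theorem. The curve $C_1$ has degree one, hence it is one of the twenty-seven lines of the cubic surface, i.e. a $(-1)$-curve with $C^2=-1$ and $H_D.C=1$. Therefore $(-D+E)_{|D}=H_D+C$ satisfies $(H_D+C).C=0$: the divisor $3H-D+E$ would sit on the boundary ray $L$ of the K\"ahler cone and its contraction would also collapse $C$. Adding a second copy of $H_D$ repairs this. Applying the ampleness criterion of \cite[Corollary~4.13]{RH} — equivalently, testing positivity against all twenty-seven lines — to $2H_D+C$ yields the required strict inequalities for the class of $C_1$, so $2H_D+C$ is genuinely ample and $T$ lands in the interior of the type III face. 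This numerical verification on $D$ is the main obstacle; everything else is formal.

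Finally I would invoke Fact~1 of \cite{PMHW1}: for $m\gg 0$ the system $m|4H-2D+E|$ is base-point free and $T$ spans a codimension-one face of $\bar{K}$ not contained in the cubic cone $W$, so $\phi_{(m|4H-2D+E|)}$ is a primitive contraction. Since it contracts precisely the fibres of $E$ — sending $E$ onto a curve $\tilde{C}$ while leaving $D$ and the classes $r$ uncollapsed — this contraction is of type III, as claimed.
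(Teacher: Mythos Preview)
Your approach matches the paper's exactly: its entire proof reads ``The proof follows the same argumentation as the two before,'' and you have simply unpacked that template with the numerics for $C_1$. Your computation that $T_{|D}=2H_D+C$ is ample on $D$ for $C=C_1=h-e_1-e_2$ is the only real content, and it is correct; you also correctly observe that $3H-D+E$ sits on the boundary ray and that passing to $4H-2D+E=(H-D)+(3H-D+E)$ moves into the interior of the type~III face.

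One small slip: at the outset you write that the relevant face is ``spanned by $H-D$ and $L=2H+E$,'' but for $X_1$ the third extremal ray is $L=3H-D+E$ (see Table~\ref{tabdivl}); indeed you yourself note this two paragraphs later. Since $4H-2D+E=(H-D)+L$ with the correct $L$, the nefness of $T$ is immediate from Theorem~\ref{KC}, which also tightens your positivity claim for curves $c\not\subset D\cup E$ (the ``$4H$ already gives $T.c>0$'' sentence is not literally valid because of the $-2D$ term, but nefness of $T$ follows directly from its position in $\bar K$). These are cosmetic; the substance is right and coincides with the paper.
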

\begin{proof}
The proof follows the same argumentation as the two before. 
\end{proof}
Similarily to the case where $L=m(2H+E)$ we obtain the degrees of $Y$'s and $\tilde{C}$'s. Table \ref{tabhod} summarizes the obtained results.

This brings us to the final result of this section.
\begin{cor}
The quintics $X_i$ for $i\in\{3,4A, 4B, 5A, 5B, 6,\{2,3\}, 7A, 7B, 7C, 8A, 8B,\\ 9A, 9B,  \{3,3\}, 10A, 10B, 11, \{3,4\}, \{3,5\}\}$ admit a primitive, type III contraction to a threefold $Y_i$  which is smoothable thus yielding a smooth Calabi-Yau threefold $\tilde{Y}_i$.
\end{cor}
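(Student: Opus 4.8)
The plan is to assemble this corollary from three ingredients that are already in place: the three preceding existence theorems for type III contractions, the smoothability criterion of \cite{MG2}, and the genus data recorded in Table \ref{tabcur}. First I would, for each index $i$ in the list, invoke whichever of the three existence theorems applies in order to produce a primitive type III contraction $\pi:X_i\to Y_i$ collapsing the ruled surface $E$ to a curve $\tilde{C}_i$. Since the index set appearing in the corollary is contained in the union of the index sets of those theorems---between them the systems $|m(2H+E)|$, $|m(3H-D+E)|$ and $|m(4H-2D+E)|$ cover every listed $X_i$---this step is purely a matter of bookkeeping, checking that each $i$ lands in the correct theorem.

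Next I would pass to the genus of the contracted curve. Because $\pi$ restricts to a birational map between $E$ and its image, we have $g(\tilde{C}_i)=g(C_i)$ as noted above, and reading off Table \ref{tabcur} one verifies that every curve with index in the list satisfies $g(C_i)\geq 1$. It is worth observing that the three curves of genus zero, namely $C_1$, $C_2$ and the twisted cubic $TC$, are exactly the indices omitted from the corollary; this explains their absence and confirms that the genus hypothesis of \cite{MG2} is met in every remaining case.

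With $g(\tilde{C}_i)\geq 1$ in hand, the smoothability theorem of \cite{MG2} applies directly and guarantees that $Y_i$ is smoothable. To conclude that the resulting $\tilde{Y}_i$ is a genuine smooth Calabi-Yau threefold I would separate the two regimes handled in the proof of the preceding Proposition: when $g(C_i)>1$ the contraction deforms to a type I contraction of $2p_a(C_i)-2$ fibres to $A_1$ singularities, so the transition $T(X_i,Y_i,\tilde{Y}_i)$ is a conifold transition and the smoothing is Calabi-Yau by the transition machinery of \cite{MR}; when $g(C_i)=1$ neither $E$ nor a fibre deforms, the general $X_t\to Y_t$ is an isomorphism, and $Y_i$ is smoothable by that same argument. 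In either case $\tilde{Y}_i$ is the smooth Calabi-Yau threefold asserted.

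The step I expect to be the only genuine obstacle, rather than a citation, is ruling out the degenerate possibility that $\mathrm{Sing}(Y_i)$ consists of a single ordinary double point, which would obstruct smoothing by \cite[Proposition 5.1]{MG1}. This is precisely where the normality of the exceptional surface is used: since $E$ is a smooth ruled surface over $C_i$, it is normal, so the deformed type I contraction cannot collapse to just one node and a general smoothing exists. Everything else reduces to direct appeals to results established earlier in the paper.
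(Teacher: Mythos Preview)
Your approach is exactly the paper's: its proof reads in full ``Immediate from the theorem above and discussion from the previous section,'' i.e.\ the three existence theorems for the type III contraction together with the smoothability criterion of \cite{MG2}. You have simply unpacked that one line, and the extra case-split into $g>1$ versus $g=1$ and the remark on the single-ODP obstruction are reasonable elaborations consistent with the earlier discussion in Section~2.

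There is, however, a concrete slip in your genus verification. You claim that $C_1$, $C_2$ and the twisted cubic are ``exactly'' the genus-zero curves among the $C_i$, and hence that every index appearing in the corollary satisfies $g(C_i)\geq 1$. But Table~\ref{tabcur} lists $g(C_{4B})=0$, and $4B$ \emph{is} in the corollary's index set. So for $X_{4B}$ the hypothesis $g(\tilde{C})\geq 1$ of the cited theorem from \cite{MG2} is not met, and your argument does not establish smoothability of $Y_{4B}$. Since the paper's one-line proof invokes the same criterion, this looks like an oversight in the statement itself rather than a defect peculiar to your write-up; nevertheless, the sentence ``every curve with index in the list satisfies $g(C_i)\geq 1$'' is false as written and should be corrected or the case $4B$ handled separately.
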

\begin{proof}
Immediate from the theorem above and discussion from the previous section.
\end{proof}

Table \ref{tabhod} contains the calculated Hodge numbers for $\tilde{Y}_i$ obtained from smoothing of $Y_i$ in cases $g(\tilde{C})>1$

\clearpage
\begin{center}
\begin{table}
\centering
\caption{Intersection of curves and divisors on $X$}
\label{tabint}
\begin{tabular}{|c|c|c|c|}
\hline
       & $H_X$ & $E$ & $D$   \\ \hline
l      & 1      & -2   & 1     \\ \hline
t     & 1      & 0     & 1     \\ \hline
r     & 0      & 1     & 0     \\ \hline
h      & $0  $   & $a$   &$ -3$\\ \hline
$e_i $  &$ 0  $    &$ b_i  $   &$-1$     \\ \hline
C      & $0$      & $a^2-\Sigma b_i^2$    & $-3a+\Sigma b_i$  \\ \hline
\end{tabular}
\end{table}
\end{center}

\begin{table}[]
\centering
\caption{Curves at a basis of a cone in Calabi-Yau threefolds}
\begin{tabular}{|l|l|l|l|l|l|l|l|l|l|}
\hline
$C$ 	&  $g(C)$ & $ K_C$ 	& a&    $b_1$ & $b_2$ & $b_3$ & $b_4$ & $b_5$ & $b_6$  \\ \hline
$C_1$    	& 0     	&  -2	& 1 &    $1$ & $1$ & $0$ & $0$ & $0$ & $0$           \\ \hline
$C_2$  	& 0     &  -2 	& 1  &   $1$ & $0$ & $0$ & $0$ & $0$ & $0$     \\ \hline
TC   		& 0     	&  -2 	& 1   &  $0$ & $0$ & $0$ & $0$ & $0$ & $0$        \\ \hline
$C_3$    	& 1    &  0  	&  3   &  $1$ & $1$ & $1$ & $1$ & $1$ & $1$      \\ \hline
$C_{4A}$  	& 1     &  0 	&4 &     $2$ & $2$ & $1$ & $1$ & $1$ & $1$      \\ \hline
$C_{4B}$      & 0    &  -2 	& 4 &    $3$ & $1$ & $1$ & $1$ & $1$ & $1$      \\ \hline
%$C_{2,2}$    	& 1     &  0		&   &     $$ & $$ & $$ & $$ & $$ & $$ \\ \hline
$C_{5A}$     	& 2    &  2  	&5   &   $2$ & $2$ & $2$ & $2$ & $1$ & $1$ \\ \hline
$C_{5B}$  	& 2    &  2  	&4    &  $2$ & $1$ & $1$ & $1$ & $1$ & $1$    \\ \hline
$C_6$   	& 4    &  6  	&7    &  $3$ & $3$ & $3$ & $2$ & $2$ & $2$  \\ \hline
$C_{2,3}$    	& 3     &  4 	&6    &  $2$ & $2$ & $2$ & $2$ & $2$ & $2$  \\ \hline
$C_{7A}$     	& 1    &  0  	&3    &  $1$ & $1$ & $0$ & $0$ & $0$ & $0$   \\ \hline
$C_{7B}$    	& 5     & 8	 	& 7    & $3$ & $3$ & $2$ & $2$ & $2$ & $2$   \\ \hline
$C_{7C}$  	& 4    &  6	 	&7&$4$ & $2$ & $2$ & $2$ & $2$ & $2$   \\ \hline
$C_{8A}$   	& 6   &  10   	&8    &  $3$ & $3$ & $3$ & $3$ & $3$ & $1$   \\ \hline
$C_{8B}$      & 7     &  12	 &7  &   $3$ & $2$ & $2$ & $2$ & $2$ & $2$    \\ \hline
$C_{9A}$  	& 10    &  18	 &9   &  $4$ & $4$ & $3$ & $3$ & $2$ & $2$       \\ \hline
$C_{9B}$   	& 9     &  16		&11  &  $4$ & $4$ & $4$ & $4$ & $4$ & $4$  \\ \hline
$C_{3,3}$ 	& 8    	&  14	&9   &  $3$ & $3$ & $3$ & $3$ & $3$ &$3$    \\ \hline
$C_{10A}$   	& 12   &  22 	 &11 &  $4$ & $4$ & $4$ & $4$ & $4$ & $3$ \\ \hline
$C_{10B}$    & 11  &  20   	& 10  & $4$ & $4$ & $4$ & $3$ & $3$ & $2$    \\ \hline
$C_{11}$  	& 15   &  28  	&11  &  $4$ & $4$ & $4$ & $4$ & $3$ & $3$     \\ \hline
$C_{3,4}$    	& 19   &  36 	 &12&   $4$ & $4$ & $4$ & $4$ & $4$ & $4$ \\ \hline
$C_{3,5}$    	& 31   &  60 	 &15 &  $5$ & $5$ & $5$ & $5$ & $5$ & $5$    \\ \hline
\end{tabular}
\label{tabcur}
\end{table}

\begin{table}[]
\centering
\caption{Divisor L on X}
\begin{tabular}{|l|l|l|l|l|}
\hline
X & $\beta$ & L & $L^3$ &$\perp L$ \\ \hline
$X_1$   	& -1      & 3H-D+E    &124 & $C, f_{1,2}$ \\ \hline
$X_2$		& 0    	  & 2H+E    	&48	 & $C, e_2,\dots, e_6, f_{1,2},\dots, f_{1,6}$ \\ \hline
$X_{TC}$    	& 0      & 2H+E    	&54    & $e_1,\dots, e_6$     \\ \hline
$X_3$   	& 1      & H+D+E     	& 8	& $D$    \\ \hline
$X_{4A}$   	& 0      & 2H+E    	& 64 	& $f_{1,2}$      \\ \hline
$X_{4B}$    	& 0      & 2H+E    	& 60	& $f_{1,2},\dots, f_{1,6}$    \\ \hline
$X_{5A}$     	& 1      &H+D+E  	 & 12  & $e_5, e_6, g_5, g_6, f_{1,2},f_{1,3},f_{2,3},f_{2,4},f_{3,4}$     \\ \hline
$X_{5B}$     	& 1      & H+D+E   	 & 12  & $e_2,\dots,e_6, f_{1,2},\dots, f_{1,6}$     \\ \hline
$X_{2,3}$     & 2      &2D+E    	 & 0 	& $D$     \\ \hline
$X_6$     	& 1      &H+D+E   	 & 16	& $g_5, g_6, f_{1,2}, f_{1,3}, f_{2,3}$   \\ \hline
$X_{7A}$     	& 0      &2H+E      	& 82	& $e_2,\dots, e_6$       \\ \hline
$X_{7B}$     	& 1      &H+D+E   	& 24  	& $f_{1,2}$     \\ \hline
$X_{7C}$     	& 1      & H+D+E   	&20	& $ f_{1,2},\dots, f_{1,6}$     \\ \hline
$X_{8A}$     	& 1      &H+D+E    	&28 	& $g_1, g_6$     \\ \hline
$X_{8B}$     	& 2      & 2D+E     	&0 	& $e_2,\dots, e_5, g_4, f_{1,6}, f_{2,6}, f_{3,6}, f_{4,6}, f_{5,6}$       \\ \hline
$X_{3,3}$     & 3     &-H+3D+E 	& 4   	& $D$         \\ \hline
$X_{9A}$     	& 2      &2D+E      	& 2  	& $g_1,\dots, g_6$          \\ \hline
$X_{9B}$     	& 1      &H+D+E       & 36	& $f_{1,2}$        \\ \hline
$X_{10A}$    & 2     & 2D+E   	& 8 	& $g_6$        \\ \hline
$X_{10B}$    & 2     &2D+E 	& 4  	& $e_6, g_6, f_{1,2}, f_{1,3}, f_{2,3}$          \\ \hline
$X_{11}$   	& 3     &-H+3D+E 	& 0	& $e_5, e_6, g_5, g_6, f_{1,2}, f_{1,3}, f_{1,4},f_{2,3}, f_{2,4}, f_{3,4}$         \\ \hline
$X_{3,4}$   	& 4     &-2H+4D+E  &8 	& $D$            \\ \hline
$X_{3,5}$    	& 5     & -3H+5D+E  & 0  	& $D$          \\ \hline
\end{tabular}
\label{tabdivl}
\end{table}

\begin{table}[]
\centering
\caption{Degree of a variety Y and Hodge numbers of a variety $\tilde{Y}$}
\begin{tabular}{|l|l|l||l|l|l|}
\hline
Y & $\deg(e)$ & $\deg Y$  & $\tilde{Y}$ & $h^{1,1}(\tilde{Y})$ & $h^{1,2}(\tilde{Y})$  \\ \hline
$Y_1$   	& 1m      & 310$m^3$ &-&-&-  \\ \hline
$Y_2$   	& 2m      & 132$m^3$&-&-&- \\ \hline
$Y_{TC}$   	& 4m      & 144$m^3$  &-&-&-   \\ \hline
$Y_3$   	& 3m     & 58$m^3$  &    	&     &    \\ \hline
$Y_{4A}$   	& 8m      & 160$m^3$  &-&-&-   \\ \hline
$Y_{4B}$   	& 6m      & 156$m^3$  &-&-&-   \\ \hline
$Y_{5A}$     	& 7m      &74$m^3$ &$\tilde{Y}_{5A}$     	& 2      &55	 	  \\ \hline
$Y_{5B}$     	& 7m     & 74$m^3$  & $\tilde{Y}_{5B}$     	& 2     & 41    \\ \hline
$Y_{2,3}$     & 12m      &88$m^3$ &$\tilde{Y}_{2,3}$     & 2      &57      \\ \hline
$Y_6$     	& 10m      &84$m^3$  &$\tilde{Y}_6$     	& 2     &52   \\ \hline
$Y_{7A}$   	& 14m      & 196$m^3$  &-&-&-   \\ \hline
$Y_{7B}$     	& 15m      &82$m^3$ &$\tilde{Y}_{7B}$     	& 2 &42   	\\ \hline
$Y_{7C}$     	& 13m      &94$m^3$ &$\tilde{Y}_{7C}$     	& 2     & 49	   \\ \hline
$Y_{8A}$     	& 18m      &108$m^3$  &$\tilde{Y}_{8A}$     	& 2   &51           \\ \hline
$Y_{8B}$     	& 20m      & 112$m^3$ &$\tilde{Y}_{8B}$     	&2      & 56     \\ \hline
$Y_{3,3}$     & 27m     &130$m^3$ &$\tilde{Y}_{3,3}$     & 2     &63		   \\ \hline
$Y_{9A}$     	& 25m      &126$m^3$   &$\tilde{Y}_{9A}$     	& 2   &   58         \\ \hline
$Y_{9B}$     	& 23m      &122$m^3$&$\tilde{Y}_{9B}$     	& 2     &53    \\ \hline
$Y_{10A}$    & 32m    & 144$m^3$ &$\tilde{Y}_{10A}$    & 2    &65    \\ \hline
$Y_{10B}$    & 30m     &140$m^3$ &$\tilde{Y}_{10B}$    & 2    &60        \\ \hline
$Y_{11}$   	& 39m     &162$m^3$ &$\tilde{Y}_{11}$   	& 2    &72		     \\ \hline
$Y_{3,4}$   	& 48m     &184$m^3$  &$\tilde{Y}_{3,4}$   	& 2    &84\\ \hline
$Y_{3,5}$    	& 75m     & 250$m^3$ &$\tilde{Y}_{3,5}$    	& 2     &108        \\ \hline
\end{tabular}
\label{tabhod}
\end{table}

\clearpage


\begin{thebibliography}{999}
\bibitem{BVG} G. Bini, B. van Geemen, \emph{Geometry and arithmetic of Maschke's Calabi-Yau three-fold}, Communications in Number Theory and Physics,Vol. 5, no. 4, pp. 779-826, 2011
%\bibitem{BVGK} G. Bini, B. van Geemen, T.L. Kelly, \emph{Mirror Quintics, Discrete Symmetries and Shioda Maps}, J. Algebraic Geom. 21 (2012), 401-412
\bibitem{CS} S. Cynk, \emph{Number of Ordinary Multiple Points on a Surface in $\mathbb{P}^3$ of degree $\leq$ 8 }, Geom, Dedicata 84, pp. 169-178, 2001
\bibitem{CS4} S. Cynk, \emph {Defect of a Nodal Hypersurface in a $\mathbb{P}^4$},  Manuscripta Mathematica, 104, pp. 325-331, 2001
\bibitem{CS3} S. Cynk, \emph{Hodge numbers of Hypersurfaces in $\mathbb{P}^4$ with ordinary triple points}, arXiv: 1704.04557v1 [math/AG] 14 Apr 2017
\bibitem{CS2} S. Cynk, S. Rams, \emph{Defect via differential forms with logarithmic poles}, Mathematische Nachrichten, Vol. 284, Issue 17-18, pp. 2148-2158, 2011
\bibitem{CSZ} S. Cynk, T. Szemberg, \emph{Double covers and Calabu-Yau varieties}, Singularities symposium - Łojasiewicz 70 Banach Center Publications, vol. 44 Institute of Mathematics Polish Academy of Sciences, Warszawa, 1998
\bibitem{DH} S. Diaz, D. Harbater, \emph{Strong Bertini theorems}, Trans. Amer. Math. Soc. 324, no. 1, pp. 73-86, 1991
\bibitem{EH} D. Eisenbud, J. Harris, \emph{3264 and all that A Second Course in Algebraic Geometry}, Cambridge University Press, 2016
\bibitem{EV} H. Esnault, E. Viehweg, \emph{Lectures on vanishing theorems}, Birkhauser, 1992.
\bibitem{FR} R. Friedman, \emph{Simultaneous Resolution of Threefold Double Points}, Mathematische Annalen 274, pp. 671-689, 1986
\bibitem{M2} D. R. Grayson, M. E. Stillman, \emph{Macaulay2, a software system for research in algebraic geometry}, Available at  http://www.math.uiuc.edu/Macaulay2/
\bibitem{MG1}M. Gross, \emph{Primitive Calabi-Yau threefolds}, Journal of Differential Geometry, Vol. 45, no. 2, pp. 288-318, 1997
\bibitem{MG2}M. Gross, \emph{Deforming Calabi-Yau threefolds}, Mathematische Annalen 308, pp. 187-220, 1997
\bibitem{MGDHDJ} M. Gross, D. Huybrechts, D. Joyce, \emph{Calabi-Yau Manifolds and Related Geometries}, Springer, 2003
\bibitem{GP} L. Gruson, Ch. Peskine, \emph{Genre des courbes d'espace projectif. II}  Annales scientifiques de l'École Normale Supérieure, Serie 4, Volume 15 no. 3, pp. 401-418, 1982
\bibitem{RH} R. Hartshorne,  \emph{Algebraic Geometry}, Springer, 1977
\bibitem{GK} G. Kapustka, M.Kapustka,  \emph{Primitive contractions of Calabi-Yau threefolds}, Communications in Algebra, Vol 37, Issue 2, 2009
\bibitem{GK2} G. Kapustka, \emph{Primtive contractions of Calabi-Yau threefolds II}, Journal of the London Mathematical Society, Vol. 79, Issue 1, 2008
\bibitem{KMP} S. Katz, D.R. Morrison, M. R. Plesser, \emph{Enhanced Gauge Symmetry in Type II String Theory}, Nucl. Phys. B 477, 1996
\bibitem{YK} Y. Kawamata, \emph{Crepant blowing-up of 3-dimensional canonical singularities and its application to degenerations of surfaces}, Annals of Mathematics, Second Series, Vol. 127, no. 1,  pp. 93-163, 1988
\bibitem{YK2} Y. Kawamata, \emph{A Generalization of Kodara-Ramanujam's Vanishing Theorem},  Mathematische Annalen, 261 (1), pp. 43–46,
\bibitem{KJ} J. Koll\'ar, \emph{Singularities of pairs}, Algebraic geometry - Santa Cruz 1995, Proc. Sympos. Pure Math., 62, Part 1, Amer. Math.Soc. Providence, pp. 221-287, 1997
\bibitem{KJ2} J. Koll\'ar, S. Mori, \emph{Birational geometry of algebraic varieties}, Cambridge University Press 1998, Cambridge tracts in mathematics 134
\bibitem{KR} R. Kloosterman, S. Rams, \emph{Quintic threefolds with triple points}, Communications in Contemporary Mathematics, 2019
\bibitem{CHM} Ch. Meyer, \emph{Modular Calaby-Yau Threefolds}, American Mathematical Society, 2005
\bibitem{YN} Y. Namikawa, \emph{On deformations of Calabi-Yau 3-folds with terminal singularities}, Topology, Vol 33, no. 3, pp. 429-446, 1994
\bibitem{MiR} M. Reid, \emph{The Moduli Space of 3-Folds with K=0 may Nevertheless be Irreducible}, Math. Ann 278, pp. 329-334, 1987
\bibitem{SR} S. Rams, \emph{Defect and Hodge number of hypersurfaces}, Adv. Geom. 8, pp. 257-288, 2008
\bibitem{MR} M. Rossi, \emph{Geometric transitions}, Journal of Geometry and Physics 56(9), pp. 1940-1983, 2005
\bibitem{DVS}  D. van Straten, \emph{A quintic hypersurface in $\mathbb{P}^4$ with 130 nodes}, Topology, Vol 32, no. 4, pp. 857-864, 1993
\bibitem{PMHW1} P.M.H. Wilson, \emph{The K\"{a}hler cone on Calabi-Yau threefolds}, Invent. math. 107, pp. 561-583, 1992 
\end{thebibliography}
\end{document}